\newtheorem{theorem}{\sc Theorem}[section]
\newtheorem{lemma}[theorem]{\sc Lemma}
\newtheorem{corollary}[theorem]{\sc Corollary}
\begin{document}

\title[Finite-by-nilpotent groups]{Finite-by-nilpotent groups and a variation of the BFC-theorem}

\author{Pavel Shumyatsky }
\address{ Pavel Shumyatsky: Department of Mathematics, University of Brasilia,
Brasilia-DF, 70910-900 Brazil}
\email{pavel@unb.br}
\thanks{The author thanks the referee for helpful comments on an earlier version of the paper. This research was supported by the Conselho Nacional de Desenvolvimento Cient\'{\i}fico e Tecnol\'ogico (CNPq),  and Funda\c c\~ao de Apoio \`a Pesquisa do Distrito Federal (FAPDF), Brazil.}
\keywords{}
\subjclass[2010]{20F12,20F24}
\begin{abstract} 
For a group $G$ and an element $a\in G$ let $|a|_k$ denote the cardinality of the set of commutators $[a,x_1,\dots,x_k]$, where $x_1,\dots,x_k$ range over $G$. The main result of the paper states that a group $G$ is finite-by-nilpotent if and only if there are positive integers $k$ and $n$ such that $|x|_k\leq n$ for every $x\in G$. More precisely, if $|x|_k\leq n$ for every $x\in G$ then $\gamma_{k+1}(G)$ has finite $(k,n)$-bounded order. 
Further, in any group $G$ the set $FC_k(G)=\{x\in G;\ |x|_k<\infty\}$ is a subgroup and $\gamma_{k+1}(FC_k(G))$ is locally normal.

\end{abstract}

\maketitle

\section{Introduction}

Let $G$ be a group. Given $k\geq1$ and elements $x_1,\dots,x_k\in G$, we write $[x_1,\dots,x_k]$ for the left-normed commutator $[\dots[[x_1,x_2],x_3]\dots,x_k]$. Elements which can be written as $[x_1,\dots,x_k]$ for suitable $x_1,\dots,x_k\in G$ will be called $\gamma_k$-values. Of course the subgroup generated by the $\gamma_k$-values is the $k$th term of the lower central series of $G$, usually denoted by $\gamma_k(G)$. For $a\in G$ write $$|a|_k=|\{[a,x_1,\dots,x_k];\ x_1,\dots,x_k\in G\}|.$$ Thus, $|a|_k$ is the cardinality of the set of $\gamma_{k+1}$-values starting with the element $a$. Note that $|a|_1$ is precisely the cardinality of the conjugacy class $a^G$. The group $G$ is a BFC-group if there is a number $n$ such that $|x^G|\leq n$ for every $x\in G$. B. H. Neumann proved in \cite{bhn} that if $G$ is a BFC-group in which $|x^G|\leq n$ for every $x\in G$, then the commutator subgroup $G'$ has finite $n$-bounded order. Here we will extend the Neumann theorem in the following way.

\begin{theorem}\label{main} Let $n$ be a positive integer and $G$ a group such that $|x|_k\leq n$ for every $x\in G$. Then $\gamma_{k+1}(G)$ has finite $(k,n)$-bounded order.
\end{theorem}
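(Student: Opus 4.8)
The plan is to argue by induction on $k$, taking B. H. Neumann's theorem \cite{bhn} as the base case $k=1$: there the hypothesis $|x|_1=|x^G|\le n$ says exactly that $G$ is a BFC-group, so $\gamma_2(G)=G'$ has finite $n$-bounded order. Throughout I would use that the hypothesis is inherited by sections: if $H\le G$ then $\{[x,h_1,\dots,h_k];\ h_i\in H\}\subseteq\{[x,g_1,\dots,g_k];\ g_i\in G\}$, so $|x|_k\le n$ holds in $H$, and likewise $|\bar x|_k\le n$ holds in every quotient $\bar G=G/N$, since the relevant set of $\gamma_{k+1}$-values in $\bar G$ is the image of the corresponding set in $G$.

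The first step is to unwind the hypothesis into a family of conditions on commutator values. If $v=[a,x_1,\dots,x_{j-1}]$ is any $\gamma_j$-value with $1\le j\le k$, then for all $y_1,\dots,y_{k+1-j}\in G$ the element $[v,y_1,\dots,y_{k+1-j}]=[a,x_1,\dots,x_{j-1},y_1,\dots,y_{k+1-j}]$ is a $\gamma_{k+1}$-value starting with $a$, whence $|v|_{k+1-j}\le|a|_k\le n$. The decisive instance is $j=k$: \emph{every $\gamma_k$-value $b$ satisfies $|b^G|\le n$}. Since the elements of finite conjugacy class form the $FC$-subgroup $FC(G)$ and $\gamma_k(G)$ is generated by $\gamma_k$-values, this yields $\gamma_k(G)\le FC(G)$; in particular every element of $\gamma_k(G)$ has a finite conjugacy class in $G$, and each $\gamma_k$-value $b$ has at most $n$ of them.

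Next I would extract structural consequences. For a $\gamma_k$-value $b$ the normal closure $\langle b^G\rangle$ is generated by the at most $n$ conjugates of $b$ and lies in $FC(G)$, so it is a finitely generated $FC$-group; hence it is central-by-finite and, by Schur's theorem (equivalently by Neumann's bound applied inside it), its derived subgroup $\langle b^G\rangle'$ is finite. Moreover the commutators $[b,g]=b^{-1}b^g$, of which there are at most $n=|b^G|$, all lie in $\langle b^G\rangle$, and $\gamma_{k+1}(G)=[\gamma_k(G),G]$ is generated by all such $[b,g]$ as $b$ ranges over $\gamma_k$-values and $g$ over $G$. A standard argument, using that the hypothesis passes to finite quotients together with $\gamma_k(G)\le FC(G)$, reduces the required bound to the case of finite $G$, where it suffices to bound $|\gamma_{k+1}(G)|$ in terms of $k$ and $n$.

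The main obstacle is precisely the gap between \emph{values} and the whole verbal subgroup: the hypothesis controls the conjugacy classes of the $\gamma_k$-values, which merely generate $\gamma_k(G)$, whereas the conclusion demands a bound on $|[\gamma_k(G),G]|$, and products of $\gamma_k$-values need not inherit the class bound $n$. To bridge this I would seek uniform control of the normal closures $\langle b^G\rangle$ — equivalently a $(k,n)$-bounded estimate for the number of distinct centralizers $C_G(b)$ that occur, each of index at most $n$ — so that boundedly many of these closures already generate $\gamma_{k+1}(G)$ modulo a subgroup of bounded order. Once such a reduction is in place, passing to a suitable section $G/Z$, with $Z$ a central-by-finite piece built from $C_G(\gamma_k(G))$, should recover a group in which the hypothesis holds for the parameter $k-1$, allowing the inductive hypothesis to close the argument. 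I expect this uniform assembly of the closures $\langle b^G\rangle$, rather than the commutator bookkeeping, to be the crux of the whole proof.
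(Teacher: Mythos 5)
Your opening observations are correct and coincide with the paper's first step (Lemma~\ref{yes}): from $|a|_k\le n$ one gets $|b^G|\le n$ for every $\gamma_k$-value $b$, hence $\gamma_k(G)\le FC(G)$, and each normal closure $\langle b^G\rangle$ is a finitely generated FC-group with finite derived subgroup. You also correctly identify where the difficulty lies. But the proof stops exactly there: the passage from ``each $\gamma_k$-value has a class of size $\le n$'' to ``$|\gamma_{k+1}(G)|$ is $(k,n)$-bounded'' is the entire content of the theorem, and your proposal replaces it with a hope (``I would seek uniform control\dots should recover\dots I expect this to be the crux''). Two concrete problems. First, the reduction to finite $G$ is not automatic: one needs residual finiteness, which the paper obtains only after invoking the higher BFC theorem of \cite{dms} to make $\gamma_k(G)'$ finite, so that $G$ becomes abelian-by-nilpotent and Hall's theorem \cite{hall2} applies; nothing in your setup supplies this. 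Second, and more seriously, the proposed closing step --- pass to a section $G/Z$ with $Z$ of bounded size built from central-by-finite pieces, so that the hypothesis holds for the parameter $k-1$, and induct on $k$ --- cannot work in that form. A free nilpotent group of class exactly $k$ satisfies $|x|_k=1$ for every $x$, yet $|x|_{k-1}$ is infinite for a generator $x$, and this persists in every quotient by a finite (or bounded) subgroup. So the $k$-hypothesis simply does not descend to the $(k-1)$-hypothesis modulo bounded sections, and the induction on $k$ has no base to stand on beyond $k=1$.

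The paper's actual route is quite different and relies on machinery your outline does not touch: Theorem~\ref{dadada} (proved via the commuting-probability results of \cite{DS}) produces a nilpotent subgroup of $(k,n)$-bounded index and class; this is used to bound the nilpotent residual $\gamma_\infty(G)$ by an induction on $[G:F(G)]$ using Hall $p'$-subgroups and the identity $[P,{}_ku]\subseteq X_k(u)$; then the nilpotency class of $G$ is bounded; and finally the sets $Y_i(a)$ of Lemma~\ref{final} convert the hypothesis into the condition $|a^{X_k}|\le n^{2^{k-1}}$, at which point \cite[Corollary 1.3]{glasgow} finishes. None of these steps is recoverable from the uniform-centralizer heuristic you sketch, so the proposal as written has a genuine gap at its core.
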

Throughout the article we use the expression ``$(a, b,\ldots)$-bounded'' to to mean that a quantity is finite and bounded by a certain number depending only on the parameters $a,b,\ldots$.

The next corollary is straightforward.
\begin{corollary}\label{glav} A group $G$ is finite-by-nilpotent if and only if there are positive integers $k$ and $n$ such that $|x|_k\leq n$ for every $x\in G$. 
\end{corollary}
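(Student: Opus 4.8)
The plan is to derive both implications, with the direction ``hypothesis implies finite-by-nilpotent'' resting entirely on Theorem~\ref{main} and the reverse direction reducing to an elementary counting argument. There is no substantive obstacle here—all of the genuine work has already been done in the proof of Theorem~\ref{main}—which is why the statement is labelled straightforward.

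First I would treat the case in which we assume that there are positive integers $k$ and $n$ with $|x|_k\leq n$ for every $x\in G$. Theorem~\ref{main} then tells us that $\gamma_{k+1}(G)$ has finite order. Setting $N=\gamma_{k+1}(G)$, this is a finite normal subgroup of $G$, and the quotient satisfies $\gamma_{k+1}(G/\gamma_{k+1}(G))=1$, so $G/N$ is nilpotent of class at most $k$. By the definition of a finite-by-nilpotent group—one possessing a finite normal subgroup with nilpotent quotient—this shows that $G$ is finite-by-nilpotent.

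Conversely, suppose $G$ is finite-by-nilpotent, say $N$ is a finite normal subgroup with $G/N$ nilpotent of class $c$, so that $\gamma_{c+1}(G)\leq N$. Taking $k=c$, every element of the form $[x,x_1,\dots,x_c]$ is a $\gamma_{c+1}$-value and hence lies in $N$. Consequently the set $\{[x,x_1,\dots,x_c]\;;\ x_1,\dots,x_c\in G\}$ is contained in $N$, which gives $|x|_c\leq|N|$ for every $x\in G$. Thus the hypothesis of Theorem~\ref{main} holds with $k=c$ and $n=|N|$, completing this direction.

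The only point requiring a moment's care is the bookkeeping of indices: one must confirm that a commutator $[x,x_1,\dots,x_k]$ with $k$ entries after $x$ is a $\gamma_{k+1}$-value, so that the correct term of the lower central series is invoked in each direction. Once this is checked the equivalence is immediate from Theorem~\ref{main}.
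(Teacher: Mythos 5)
Your proof is correct and is exactly the argument the paper has in mind when it calls the corollary ``straightforward'': the forward direction is an immediate application of Theorem~\ref{main}, and the converse follows from $\gamma_{c+1}(G)\leq N$ forcing $|x|_c\leq|N|$. The index bookkeeping you flag is handled correctly throughout.
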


The proof of Theorem \ref{main} is based on some new tools that were recently developed within the framework of the probabilistic group theory (see the Section 3 for more details).

Recall that the FC-centre of a group $G$ is the subgroup $FC(G)$ formed by the elements $x\in G$ such that $|x^G|<\infty$. This is a characteristic subgroup of $G$ whose commutator subgroup is locally normal, that is, any finite subset of $FC(G)'$ is contained in a finite normal subgroup \cite[14.5]{rob}. For $k\geq1$ define $$FC_k(G)=\{x\in G;\ |x|_k<\infty\}.$$ It turns out that $FC_k(G)$ is a subgroup of $G$. 
\begin{theorem}\label{main3} In any group $G$ the set $FC_k(G)$ is a subgroup and $\gamma_{k+1}(FC_k(G))$ is locally normal.
\end{theorem}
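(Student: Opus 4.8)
The plan is to reduce everything to the theory of the FC-centre together with the bounded statement of Theorem \ref{main}. The cornerstone is the following elementary observation, which I would prove first: if $a\in FC_k(G)$ and $u=[a,x_1,\dots,x_{k-1}]$ is any $\gamma_k$-value starting with $a$, then the set $\{[u,x]:x\in G\}=u^{-1}u^G$ is contained in the finite set of $\gamma_{k+1}$-values starting with $a$, and the assignment $u^g\mapsto u^{-1}u^g$ is a bijection from $u^G$ onto $u^{-1}u^G$. Hence $|u^G|\le|a|_k<\infty$, so $u\in FC(G)$. In other words every $\gamma_k$-value whose first entry lies in $FC_k(G)$ already lies in $FC(G)$, and consequently $\gamma_k(FC_k(G))\le FC(G)$; in particular $\gamma_{k+1}(FC_k(G))\le FC(G)$, which will later supply the finiteness of conjugacy classes needed for Dietzmann's lemma.

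Next I would show that $FC_k(G)$ is a subgroup, arguing by induction on $k$, the case $k=1$ being the classical fact that $FC(G)=FC_1(G)$ is a subgroup. Writing $T_j(a)$ for the set of $\gamma_{j+1}$-values starting with $a$, one has the identity $T_k(a)=\bigcup_{x\in G}T_{k-1}([a,x])$, so the condition $a\in FC_k(G)$ says precisely that this union is finite; in particular $[a,x]\in FC_{k-1}(G)$ for every $x$. For inverses one uses the conjugation-equivariance $T_{k-1}(v^g)=T_{k-1}(v)^g$ together with $[a^{-1},x]=([a,x]^{-1})^{a^{-1}}$ to identify $T_k(a^{-1})$, up to a single global conjugation, with $\bigcup_xT_{k-1}([a,x]^{-1})$, whose finiteness follows from the inductive hypothesis one level down. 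For products one expands $[ab,x]=[a,x]^b[b,x]$ and appeals to a quantitative form of the inductive step, namely that $T_{k-1}(pq)$ is confined to a set built in a bounded way from $T_{k-1}(p)$ and $T_{k-1}(q)$; since $\bigcup_xT_{k-1}([a,x])$ and $\bigcup_xT_{k-1}([b,x])$ are the finite sets $T_k(a)$ and $T_k(b)$, the union $\bigcup_xT_{k-1}([ab,x])$ stays inside a fixed finite set, giving $ab\in FC_k(G)$. The bookkeeping in this quantitative product step is routine but is the least transparent part of the subgroup argument.

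With $FC_k(G)$ a subgroup, I would reduce local normality of $\gamma_{k+1}(FC_k(G))$ to a statement about finitely generated subgroups. Let $H=\langle a_1,\dots,a_m\rangle\le FC_k(G)$. Since $\gamma_k(H)\le FC(G)$ by the cornerstone, and since $\gamma_{k+1}(H)$ is generated, as a normal subgroup of $H$, by the finitely many basic $\gamma_{k+1}$-values $[a_{i_1},\dots,a_{i_{k+1}}]$, each of which lies in the finite set $T_k(a_{i_1})\subseteq FC(G)$ and therefore has a finite $G$-conjugacy class, the group $\gamma_{k+1}(H)$ is generated by a finite, conjugation-closed subset of $FC(G)$. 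Hence $\gamma_{k+1}(H)$ is a finitely generated FC-group. A finitely generated FC-group that is periodic is finite (it is centre-by-finite with finitely generated periodic, hence finite, centre), so the only remaining point is to show that $\gamma_{k+1}(H)$ is periodic.

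This last point is the main obstacle. Conjugation by any $g\in G$ permutes the finite conjugacy class $u^G$ of an element $u\in FC(G)$, so it acts as an automorphism of finite order on $\langle u^G\rangle$; thus the relevant actions are of finite order on the finitely generated abelian sections that might carry torsion-free quotients. What must be extracted from the hypothesis $x_k\in FC_k(G)$ is that conjugation by an $FC_k$-element acts unipotently on such torsion-free sections, for otherwise a non-trivial root-of-unity eigenvalue would force infinitely many commutators $[x_k,y_1,\dots,y_k]$, contradicting $|x_k|_k<\infty$. A finite-order unipotent automorphism of a torsion-free abelian group is trivial, so conjugation acts trivially on the free part and every $\gamma_{k+1}$-value with entries in $FC_k(G)$ is forced to be torsion; alternatively, one localises to a quotient in which the hypothesis of Theorem \ref{main} holds with a bounded $n$ and reads off finiteness of the corresponding $\gamma_{k+1}$ directly. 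Once periodicity is in hand, $\gamma_{k+1}(H)$ is finite for every finitely generated $H$, so any finite subset $F$ of $\gamma_{k+1}(FC_k(G))$ lies in some finite $\gamma_{k+1}(H)$; as $F\subseteq FC(G)$ its elements have finite $G$-conjugacy classes, and Dietzmann's lemma then places $\langle F\rangle$ inside a finite normal subgroup of $G$, proving local normality.
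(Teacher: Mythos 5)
Your overall architecture matches the paper's: the ``cornerstone'' is exactly the paper's Lemma \ref{yes}, the subgroup step is essentially the paper's Lemma \ref{product} (your inverse and product manipulations, combined with the centralizer-index bounds from the cornerstone, can be made to work, even though you leave the quantitative product step as ``routine bookkeeping''), and the endgame --- reduce to a finitely generated $H\le FC_k(G)$, show $\gamma_{k+1}(H)$ is finite and generated by elements with finite $G$-conjugacy classes, then invoke Dietzmann --- is the paper's proof verbatim. The problem is that the single step you flag as ``the main obstacle,'' namely the periodicity (equivalently finiteness) of $\gamma_{k+1}(H)$, is the actual mathematical content of this theorem, and your treatment of it is not a proof. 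This is precisely the paper's Theorem \ref{main2}, whose proof occupies a page and needs several genuine ingredients: Schur's theorem applied after bounding $[N:Z(N)]$ for $N=\gamma_{k+1}(H)$, a separate lemma (Lemma \ref{abeliansubgr}) handling abelian normal subgroups via the identity $[g,b]=[b,g^{-1}]$ which places $[N,b_1,\dots,b_k]$ inside $X_k(b_1)$, an induction on nilpotency class powered by the Fern\'andez-Alcober--Morigi theorem \cite{guma}, and finally the observation that when $\gamma_{k+1}$ is central the set $X_k(a)$ is closed under powers, forcing its elements to have order at most $n$.

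Neither of your two suggested routes closes this gap. The ``localise to a quotient where Theorem \ref{main} applies with bounded $n$'' alternative fails because Theorem \ref{main} requires a \emph{uniform} bound on $|x|_k$ over all $x$; in $H=\langle a_1,\dots,a_m\rangle$ you only control $|a_i|_k$, and Lemma \ref{product} gives bounds that grow with word length, so no uniform $n$ exists (this is exactly why the paper proves Theorem \ref{main2} separately from Theorem \ref{main}). The eigenvalue/unipotence sketch has a more basic defect: the hypothesis $|a|_k<\infty$ constrains commutators $[a,x_1,\dots,x_k]$ with $a$ in the \emph{first} position, whereas ``conjugation by an $FC_k$-element on a torsion-free section'' concerns commutators $[v,{}_j a]$ with $a$ in the later positions. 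Converting one into the other requires an identity such as $[g,a]=[a,g^{-1}]$, valid only when $g$ lies in an abelian normal subgroup, and then an argument locating the relevant torsion-free section inside such a subgroup --- which is to say, it requires rebuilding the proof of Theorem \ref{main2}. As written, the assertion that ``every $\gamma_{k+1}$-value with entries in $FC_k(G)$ is forced to be torsion'' is a restatement of what must be proved, not a derivation of it.
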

We will deduce the above theorem from the following result which may be of independent interest.
\begin{theorem}\label{main2} Assume that $G$ is a group generated by finitely many elements $a_1,\dots,a_r$ such that $|a_i|_k\leq n$ for any $i=1\dots,r$. Then $\gamma_{k+1}(G)$ has finite $(k,n,r)$-bounded order.
\end{theorem}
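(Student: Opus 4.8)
The plan is to convert the hypothesis on the generators into an FC-structure on $\gamma_k(G)$, reduce to a finite group acting on a finitely generated abelian group, and then prove that this action is trivial on the torsion-free part.

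First I would note that every $\gamma_k$-value $w=[a,x_1,\dots,x_{k-1}]$ beginning with a generator $a$ satisfies $|w^G|\le n$, because $w^g\mapsto[w,g]=[a,x_1,\dots,x_{k-1},g]$ embeds $w^G$ into $\{[a,y_1,\dots,y_k]:y_j\in G\}$, a set of size $|a|_k\le n$. Using the identity $[u^{-1},v]=([u,v]^{-1})^{u^{-1}}$ to move an inverted first entry outside, every weight-$k$ commutator in the $a_i^{\pm1}$ is conjugate to a value of the form $[a,x_1,\dots,x_{k-1}]^{\pm1}$; since conjugation and inversion preserve class size, all such commutators have conjugacy class of size at most $n$. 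Hence $\gamma_k(G)$, being their normal closure, lies in $FC(G)$ and is generated by the finite union $\Omega$ of their conjugacy classes; thus it is a finitely generated FC-group with a $(k,n,r)$-bounded number of generators.

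Next, a finitely generated FC-group is centre-by-finite: $Z(\gamma_k(G))$ is the intersection of the centralizers of the elements of $\Omega$, each of index at most $n$, so $[\gamma_k(G):Z(\gamma_k(G))]$ is $(k,n,r)$-bounded and Schur's theorem bounds $|\gamma_k(G)'|$. Factoring out the finite characteristic subgroup $\gamma_k(G)'$ I may assume $\gamma_k(G)$ abelian, say $\gamma_k(G)=\mathbf{Z}^d\times T$ with $T$ finite. The set $\Omega$ is $G$-invariant and generates $\gamma_k(G)$, so $G$ acts through its faithful action on $\Omega$, and the image $\bar G$ of $G$ in $\mathrm{Aut}(\gamma_k(G))$ is finite of $(k,n,r)$-bounded order. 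Being centre-by-finite and finitely generated, $\gamma_k(G)$ is virtually abelian, hence all its subgroups are finitely generated; in particular $\gamma_{k+1}(G)=[\gamma_k(G),G]$ is a finitely generated FC-group, and it is finite precisely when $\bar G$ acts trivially on the free part $\mathbf{Z}^d$. Collecting the previous estimates then gives the order bound, so everything reduces to proving that $\bar G$ centralizes $\mathbf{Z}^d$, i.e. that $\gamma_{k+1}(G)$ is a torsion group.

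This last point is the main obstacle, and it is exactly here that the hypothesis on \emph{all} the generators must be used, and where I expect the tools of Section 3 to be indispensable: triviality of the action is not a formal consequence of $\bar G$ being finite, since a finite group can act nontrivially on a free abelian group with all orbits of size at most $n$. If $\bar G$ acted nontrivially then, as $\mathbf{Z}^d$ is generated as a $\mathbf{Z}[\bar G]$-module by the images of the weight-$k$ commutators, some generator $a=a_i$ would act nontrivially; I would then aim to show that such an $a$ must admit infinitely many distinct values $[a,x_1,\dots,x_k]$, contradicting $|a|_k\le n$. Upgrading the statement ``$a$ moves the module'' to ``$a$ has infinitely many $\gamma_{k+1}$-values'', and producing explicit $(k,n,r)$-bounds rather than bare finiteness, is the crux of the argument.
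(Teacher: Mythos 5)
Your opening reductions are essentially correct and run parallel to the first half of the paper's argument: the bound $|w^G|\le n$ for $w\in X_{k-1}(a_i)$ is Lemma \ref{yes}, the generation of $\gamma_{k+1}(G)$ (resp.\ $\gamma_k(G)$) by boundedly many bounded conjugacy classes is Lemma \ref{generation}, and the passage through Schur's theorem to an abelian $\gamma_k(G)$ is the same as in the paper. But the step you yourself flag as the crux --- that $[\gamma_k(G),G]$ is torsion, and in fact of bounded order --- is the actual content of the theorem, and your proposal contains no proof of it. Moreover, even if you established that $\bar G$ centralizes the free part $\mathbf{Z}^d$, ``collecting the previous estimates'' would not yet give the order bound: your setup bounds the number of generators of $[\gamma_k(G),G]$ but gives no control on the exponent of the torsion subgroup $T$, so finiteness would not convert into a $(k,n,r)$-bound. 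This is a genuine gap, not a routine verification.

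The ingredient you are missing is the one the paper isolates as Lemma \ref{abeliansubgr}: for an abelian normal subgroup $N$ and a generator $b_1$ one has $[N,b_1,\dots,b_k]=[b_1,N,b_2,\dots,b_k]\subseteq X_k(b_1)$, using $[g,b]=[b,g^{-1}]$ for $g\in N$; since $N$ is abelian this image is a \emph{subgroup}, hence has order at most $n$, and multiplying its boundedly many conjugates shows $[N,{}_kG]$ is finite of bounded order. (Incidentally, the special case $[N,{}_k a_i]\subseteq X_k(a_i)$ is exactly what would rescue your torsion-free-part argument: it forces $(\alpha-1)^k$ to have finite image for the finite-order automorphism $\alpha$ induced by $a_i$, hence $\alpha=1$ on $\mathbf{Z}^d$; but it does much more, since it also bounds the torsion.) From there the paper reduces to $G$ nilpotent of bounded class, inducts on the class using the Fern\'andez-Alcober--Morigi theorem \cite{guma} to reach the case $\gamma_{k+1}(G)\leq Z(G)$, and finishes with the identity $[a,g_1,\dots,g_k]^i=[a,g_1,\dots,g_k^i]$, which shows $X_k(a)$ is power-closed and so each of the boundedly many generators of $\gamma_{k+1}(G)$ has order at most $n$. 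Without a substitute for these three steps your argument does not close.
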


Of course, in general an FC-group need not be a BFC-group. For example a direct product of infinitely many non-abelian finite groups is FC but not BFC. On the other hand, Shalev observed in \cite{shalev} that a profinite FC-group is necessarily BFC. Here we will prove the following related result.

\begin{theorem}\label{profinite} Assume that $G$ is a profinite group such that $|x|_k<\infty$ for any $x\in G$. Then $\gamma_{k+1}(G)$ is finite.
\end{theorem}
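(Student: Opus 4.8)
The plan is to combine a Baire category argument with the quantitative estimates already available for the invariant $|\cdot|_k$, and then to invoke Theorem \ref{main}. Since $|x|_k<\infty$ for every $x\in G$, we have $G=FC_k(G)$ and $G=\bigcup_{n\ge 1}G_n$, where $G_n=\{x\in G:\ |x|_k\le n\}$. First I would check that each $G_n$ is closed in the profinite topology: its complement is the set of $x$ admitting tuples $\bar t_1,\dots,\bar t_{n+1}\in G^k$ with the commutator values $[x,\bar t_1],\dots,[x,\bar t_{n+1}]$ pairwise distinct, and this complement is the image, under the (open) coordinate projection $G\times (G^k)^{n+1}\to G$, of the open subset cut out by these finitely many inequalities between continuous iterated-commutator maps; hence $G_n^c$ is open. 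A profinite group is compact Hausdorff, so it is a Baire space, and a countable union of closed sets covering $G$ cannot have every member with empty interior. Therefore some $G_n$ has nonempty interior, and consequently contains a coset $gN$ of an open normal subgroup $N$; since $N$ is open in a compact group, $m:=|G:N|$ is finite.

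The heart of the argument is to upgrade this ``bounded on one coset'' information to a uniform bound $|x|_k\le n'$ valid for every $x\in G$, with $n'$ depending only on $k$, $n$ and $m$. Here I would use the submultiplicative-type estimates underlying Theorem \ref{main3} (the very estimates that show $FC_k(G)$ is closed under multiplication and inversion), namely bounds of the shape $|ab|_k\le f(|a|_k,|b|_k)$ and $|a^{-1}|_k\le f(|a|_k)$. Writing an arbitrary $u\in N$ as $u=g^{-1}(gu)$ with both $g$ and $gu$ lying in $gN\subseteq G_n$, such estimates bound $|u|_k$ in terms of $n$ alone; thus $N\subseteq G_{n_1}$ for some $(k,n)$-bounded $n_1$. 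Then, fixing coset representatives $g_1,\dots,g_m$ of $N$ and writing any $x\in G$ as $x=g_iu$ with $u\in N$, the same estimates bound $|x|_k$ in terms of $n_1$ and $c:=\max_i|g_i|_k$; here $c$ is finite because $G=FC_k(G)$ and there are only finitely many representatives. This yields the desired uniform bound $|x|_k\le n'$ for all $x\in G$.

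With a uniform bound in hand, Theorem \ref{main} applies directly and shows that $\gamma_{k+1}(G)$ has finite order, which is exactly the assertion. (Alternatively, once the uniform bound is available one may finish through Theorem \ref{main3}: $\gamma_{k+1}(G)$ is locally normal, and a locally normal group all of whose finite normal subgroups have bounded order is itself finite, since a finite normal subgroup of maximal order must then contain every element.)

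The main obstacle I anticipate is the algebraic step of the second paragraph: establishing the multiplicativity estimates $|ab|_k\le f(|a|_k,|b|_k)$ and $|a^{-1}|_k\le f(|a|_k)$ with explicit, $k$-controlled functions. For $k=1$ these reduce to the elementary facts $|ab|_1\le|a|_1\,|b|_1$ and $|a^{-1}|_1=|a|_1$ for conjugacy-class sizes, but for larger $k$ the iterated commutator $[ab,x_1,\dots,x_k]$ must be expanded and its value set controlled, which is precisely the technical content making $FC_k(G)$ a subgroup. I would expect to quote this from the proof of Theorem \ref{main3} rather than redo it here; the topological ingredients (closedness of $G_n$, the Baire step, and the existence of the coset $gN$) are routine by comparison.
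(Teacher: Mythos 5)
Your proposal is correct and follows essentially the same route as the paper: a Baire category argument produces a coset $Na$ on which $|\cdot|_k\le j$, the submultiplicative estimates of Lemma \ref{product} (namely $|a^{-1}|_k=|a|_k$ and $|ab|_k\le|a|_k^3|b|_k$, proved en route to Theorem \ref{main3}) upgrade this to a uniform bound on all of $G$ via $N$ and a finite transversal, and Theorem \ref{main} finishes. The estimates you were worried about are exactly Lemma \ref{product}, so there is no gap.
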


{\bf Notation.} The notation used in the paper is mostly standard. For a group $G$ and an integer $i\geq1$ we let $X_i$ denote the set of $\gamma_i$-values in $G$. If $a\in G$, we denote by $X_i(a)$ the set of all commutators of the form $[a,x_1,\dots,x_i]$. Thus $X_i(a)$ is the set $\gamma_{i+1}$-values which can be written as a commutator starting with the element $a$. Obviously, $|a|_k=|X_k(a)|$.

For elements $a,b\in G$ we write $[b,{}_la]$ to denote the long commutator $[b,a,\dots,a]$, where $a$ is repeated $l$ times. If $N$ is a subgroup of $G$, the symbol $[N,a]$ stands for the subgroup generated by the commutators $[x,a]$, where $x$ ranges over $N$. For $l\geq2$ define by induction $[N,{}_la]=[[N,{}_{l-1}a],a]$. Note that if $N$ is an abelian normal subgroup, $[N,{}_la]$ is precisely the set of commutators $[x,{}_la]$ where $x\in N$.

\section{Preliminaries}

In this section we collect some helpful facts that will be required in the proofs of the main results. We start with the following well-known criterion for nilpotency of a group, due to P. Hall \cite{hall1}.
\begin{theorem}\label{hall}
Let $N$ be a nilpotent normal subgroup of a group $G$ such that $N$ and $G/N'$ are both nilpotent. Then $G$ is nilpotent of class bounded in terms of the classes of $N$ and $G/N'$.
\end{theorem}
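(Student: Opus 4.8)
The plan is to bound the nilpotency class of $G$ by climbing the lower central series of $N$ one factor at a time, controlling how $G$ acts on each factor. Write $c$ for the class of $N$, so that $\gamma_{c+1}(N)=1$, and $d$ for the class of $G/N'$, so that $\gamma_{d+1}(G)\le N'$. For $1\le i\le c$ set $V_i=\gamma_i(N)/\gamma_{i+1}(N)$; each $V_i$ is abelian, and since $[\gamma_i(N),N]=\gamma_{i+1}(N)$ the conjugation action of $G$ on $V_i$ is trivial on $N$, hence factors through $Q=G/N$. Note that $Q$ is a quotient of the nilpotent group $G/N'$, so $Q$ is nilpotent. The whole argument is organized around the augmentation ideal $I$ of the integral group ring $\mathbf{Z}Q$: the aim is to show that each $V_i$ is annihilated by a bounded power of $I$, i.e. $I^{e_i}V_i=0$ with $e_i$ bounded in terms of $c$ and $d$, which is exactly the assertion that $[\gamma_i(N),\underbrace{G,\dots,G}_{e_i}]\le\gamma_{i+1}(N)$.

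First I would record the base case. Because $\gamma_{d+1}(G)\le N'$, any $(d+1)$-fold commutator of elements of $G$ lies in $N'$; in particular
\[ [N,\underbrace{G,\dots,G}_{d}]\le N'. \]
In module terms this says precisely $I^{d}V_1=0$, so $V_1$ is a unipotent $Q$-module of class at most $d$. The engine of the induction is the commutator map $\gamma_{i-1}(N)\times N\to\gamma_i(N)$, which induces a surjective, $Q$-equivariant homomorphism $V_{i-1}\otimes_{\mathbf Z}V_1\to V_i$. Using the Leibniz-type expansion $(q-1)(u\otimes v)=((q-1)u)\otimes qv+u\otimes((q-1)v)$ one obtains $I^{a+b-1}(A\otimes B)=0$ whenever $I^aA=0$ and $I^bB=0$; applying this with the surjection above yields $I^{e_i}V_i=0$ with $e_i=e_{i-1}+d-1$, hence $e_i\le id$. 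This is the step I expect to be the crux: making the commutator-weight bookkeeping precise. If one prefers to avoid module language, the same inclusions must instead be proved directly by induction from the Hall--Witt identity (the three subgroups lemma), and keeping track of the iterated commutators $[\gamma_i(N),G,\dots,G]$ is the delicate part.

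Finally I would climb. From $\gamma_{d+1}(G)\le\gamma_2(N)$ together with the relations $[\gamma_i(N),\underbrace{G,\dots,G}_{e_i}]\le\gamma_{i+1}(N)$ just established, an immediate induction gives $\gamma_{m_i}(G)\le\gamma_i(N)$ with $m_2=d+1$ and $m_{i+1}=m_i+e_i$. Taking $i=c+1$ and using $\gamma_{c+1}(N)=1$ shows $\gamma_{m_{c+1}}(G)=1$, so $G$ is nilpotent of class at most $m_{c+1}=d+1+\sum_{i=2}^{c}e_i$, a bound depending only on $c$ and $d$. Alternatively, one can induct directly on the class $c$ of $N$ by passing to $G/\gamma_c(N)$, but that reduction runs into the same need to control the $G$-action on the central factor $\gamma_c(N)$, which is exactly what the computation above supplies.
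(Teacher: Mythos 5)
Your argument is correct. The paper does not prove this statement at all --- it is quoted as a classical criterion of P.\ Hall and cited to \cite{hall1} --- so there is no in-paper proof to compare against; what you have written is essentially the standard proof of Hall's criterion (the one in Robinson's book): view each lower central factor $\gamma_i(N)/\gamma_{i+1}(N)$ as a module for $Q=G/N$, use the surjection from $V_{i-1}\otimes V_1$ and the Leibniz expansion to show each factor is annihilated by a bounded power of the augmentation ideal, and climb. All the steps check out (the base case $[N,{}_dG]\le\gamma_{d+1}(G)\le N'$, the well-definedness and $Q$-equivariance of the tensor surjection, and the pigeonhole giving $I^{a+b-1}(A\otimes B)=0$, which uses that $I$ is a two-sided ideal so that the interspersed group elements do no harm). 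The only blemish is a harmless off-by-one at the end: $\gamma_{m_{c+1}}(G)=1$ gives class at most $m_{c+1}-1$, not $m_{c+1}$, which of course does not affect the boundedness conclusion.
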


The following lemma will be useful.

\begin{lemma}\label{redufini}
Let $G$ be a group and $k,s$ positive integers.
\begin{enumerate}
\item $|\gamma_k(G)|\leq s$ if and only if $|\gamma_k(H)|\leq s$ for any finitely generated subgroup $H\leq G$.
\item If $G$ is residually finite, $|\gamma_k(G)|\leq s$ if and only if $|\gamma_k(Q)|\leq s$ for any finite quotient $Q$ of $G$.
\end{enumerate}
\end{lemma}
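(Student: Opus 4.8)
The plan is to handle each equivalence by treating the two implications separately; in both parts the forward implication is immediate and the substance lies in the reverse direction. For the forward implications, recall that $\gamma_k$ is monotone under the two operations in question: if $H\leq G$ then $\gamma_k(H)\leq\gamma_k(G)$, and if $Q=G/N$ is a quotient then $\gamma_k(Q)=\gamma_k(G)N/N$ is an epimorphic image of $\gamma_k(G)$. Hence $|\gamma_k(G)|\leq s$ forces $|\gamma_k(H)|\leq s$ for every subgroup $H$ and $|\gamma_k(Q)|\leq s$ for every quotient $Q$, and in particular for finitely generated subgroups and finite quotients.

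For the reverse implication in part (1), I would argue by contradiction. Assume $|\gamma_k(G)|\geq s+1$ and pick distinct elements $g_1,\dots,g_{s+1}\in\gamma_k(G)$. Since $\gamma_k(G)$ is generated by the set $X_k$ of $\gamma_k$-values, each $g_i$ can be written as a finite product $g_i=c_{i1}^{\pm1}\cdots c_{im_i}^{\pm1}$ with each $c_{ij}$ a $\gamma_k$-value, say $c_{ij}=[y_1,\dots,y_k]$ for suitable $y_1,\dots,y_k\in G$. Let $H$ be the subgroup generated by the (finitely many) elements appearing in all these expressions. Then $H$ is finitely generated, every $c_{ij}$ lies in $\gamma_k(H)$, and therefore $g_1,\dots,g_{s+1}\in\gamma_k(H)$. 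As these elements are pairwise distinct in $G$, they remain distinct in $H$, whence $|\gamma_k(H)|\geq s+1$, contradicting the hypothesis.

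Part (2) I would deduce similarly. Assuming $|\gamma_k(G)|\geq s+1$, fix distinct $g_1,\dots,g_{s+1}\in\gamma_k(G)$. Residual finiteness supplies, for each pair $i\neq j$, a normal subgroup of finite index avoiding $g_ig_j^{-1}$; intersecting these finitely many subgroups yields a normal subgroup $N$ of finite index such that the cosets $g_1N,\dots,g_{s+1}N$ are pairwise distinct in the finite quotient $Q=G/N$. Since each $g_iN$ lies in $\gamma_k(Q)$, this gives $|\gamma_k(Q)|\geq s+1$, contradicting the hypothesis. Alternatively, one may invoke part (1) to reduce to a finitely generated subgroup $H$, which is again residually finite, and separate the witnesses in a finite quotient of $H$.

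The only genuinely delicate point --- and the step I would flag as the crux --- is the passage from the subgroup $\gamma_k(G)$ to its generating set of $\gamma_k$-values: a typical element of $\gamma_k(G)$ is not itself a $\gamma_k$-value but a product of such values and their inverses. The argument works precisely because only finitely many $\gamma_k$-values are needed to express the finitely many chosen witnesses $g_1,\dots,g_{s+1}$, so that all relevant commutators can be absorbed into a single finitely generated subgroup. Everything else is bookkeeping.
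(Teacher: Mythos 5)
Your proof is correct. Note that the paper itself gives no argument here: it simply cites \cite[Lemma 4.4]{ebeshu}, so your self-contained elementary proof is a genuine addition rather than a rederivation of the paper's reasoning. The forward implications via monotonicity of $\gamma_k$ under subgroups and quotients are right, and you correctly identify and handle the one delicate point in part (1): an element of $\gamma_k(G)$ is only a product of finitely many $\gamma_k$-values and their inverses, but finitely many witnesses involve only finitely many such values, so all the entries of the relevant commutators can be gathered into a single finitely generated subgroup $H$ with $g_1,\dots,g_{s+1}\in\gamma_k(H)$. The direct argument for part (2) --- separating the $s+1$ witnesses pairwise by finite-index normal subgroups and intersecting --- is also correct, since $g_ig_j^{-1}\neq 1$ and $\gamma_k(G)N/N\leq\gamma_k(G/N)$. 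One small caveat: the ``alternative'' you sketch for part (2), passing first to a finitely generated subgroup $H$ and separating the witnesses in a finite quotient of $H$, does not quite work as stated, because the hypothesis of part (2) concerns finite quotients of $G$, and a finite quotient of a subgroup $H$ need not arise from a finite quotient of $G$. Since this is offered only as an aside and your primary argument is complete, it does not affect the validity of the proof; I would simply delete that sentence.
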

\begin{proof} This is immediate from \cite[Lemma 4.4]{ebeshu}.
\end{proof}

\begin{lemma}\label{coprime}
Let $A$ be a finite group acting on a finite group $G$ such that $(|A|,|G|)=1$. Then $[G,A,A]=[G,A]$.
\end{lemma}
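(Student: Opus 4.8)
The plan is to reduce everything to the standard coprime-action decomposition $G=[G,A]C_G(A)$, which holds whenever a finite group $A$ acts on a finite group $G$ with $(|A|,|G|)=1$; this is a classical fact of coprime action theory, and I would invoke it as known. Throughout, set $V=[G,A]$ and recall that $V$ is normal in $G$ and $A$-invariant, so that $A$ acts coprimely on $V$ as well. The inclusion $[G,A,A]=[V,A]\subseteq V=[G,A]$ is immediate, so only the reverse inclusion requires work.

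First I would record that $C_G(A)$ normalizes $[V,A]$. Indeed, if $c\in C_G(A)$ and $[v,a]$ is a generator of $[V,A]$ with $v\in V$ and $a\in A$, then $[v,a]^c=[v^c,a^c]=[v^c,a]$, because $c$ centralizes $a$; since $V$ is normal in $G$ we have $v^c\in V$, so $[v,a]^c\in[V,A]$. Hence $C_G(A)$ normalizes $[V,A]$, and in particular the normal closure $[V,A]^{C_G(A)}$ coincides with $[V,A]$ itself.

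Next I would expand $[G,A]$ using the decomposition $G=VC_G(A)$. Writing a typical element of $G$ as $vc$ with $v\in V$ and $c\in C_G(A)$, the commutator identity gives $[vc,a]=[v,a]^c[c,a]=[v,a]^c$, since $[c,a]=1$. Consequently $[G,A]=[VC_G(A),A]$ is generated by the conjugates $[v,a]^c$, that is, $[G,A]=[V,A]^{C_G(A)}$. By the previous step this equals $[V,A]=[G,A,A]$, which together with the trivial inclusion yields $[G,A,A]=[G,A]$.

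The only substantive input is the decomposition $G=[G,A]C_G(A)$, so the main obstacle, if one insists on a self-contained argument, is establishing this formula. I would handle it by a minimal-counterexample reduction: $A$ acts trivially on $G/[G,A]$, and passing to a minimal $A$-invariant normal section together with the coprimality hypothesis (which forces the relevant fixed points to lift, by a Schur--Zassenhaus type splitting / cohomology-vanishing argument) yields $G=[G,A]C_G(A)$. In the present setting, however, this decomposition is standard enough to be cited rather than reproven, so I expect the bulk of the lemma to be the short derivation above.
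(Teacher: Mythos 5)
Your proof is correct: the paper states this lemma without proof, as a classical fact of coprime action theory, and your argument is exactly the standard textbook derivation, reducing to the decomposition $G=[G,A]C_G(A)$ and then checking that $C_G(A)$ normalizes $[[G,A],A]$ and that $[vc,a]=[v,a]^c$ for $c\in C_G(A)$. The only caveat worth recording is that the decomposition $G=[G,A]C_G(A)$ in full generality rests on the conjugacy part of the Schur--Zassenhaus theorem, which requires one of the two coprime factors to be soluble; this is automatic here by the Feit--Thompson theorem, since $(|A|,|G|)=1$ forces one of $A$, $G$ to have odd order.
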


\begin{lemma}\label{lll}
Let $G$ be a metabelian group and suppose that $a,b\in G$ are $l$-Engel elements. If $x\in\langle a,b\rangle$, then $x$ is $(2l+1)$-Engel.
\end{lemma}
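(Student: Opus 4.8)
The plan is to exploit the fact that in a metabelian group the commutator subgroup carries a module structure over a \emph{commutative} ring. Write $A=G'$; since $G$ is metabelian, $A$ is abelian and normal, and $\bar G:=G/A$ is abelian, so the integral group ring $R:=\mathbf{Z}[\bar G]$ is commutative. Because $A$ centralises itself, conjugation turns $A$ into an $R$-module, and for $c\in A$ and $g_1,\dots,g_m\in G$ the left-normed commutator becomes a module product
\begin{equation*}
[c,g_1,\dots,g_m]=c\,(\bar g_1-1)(\bar g_2-1)\cdots(\bar g_m-1),
\end{equation*}
where $\bar g$ denotes the image of $g$ in $\bar G$ and the expression is read additively in $A$. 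The whole argument is then carried out inside $R$ and this module.

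Next I would translate the hypotheses. Specialising $c$ to range over $A$ in the identities $[c,\underbrace{a,\dots,a}_{l}]=1$ and $[c,\underbrace{b,\dots,b}_{l}]=1$ shows that $\alpha^l$ and $\beta^l$ annihilate $A$, where $\alpha:=\bar a-1$ and $\beta:=\bar b-1$. For any $x\in\langle a,b\rangle$ the image $\bar x$ lies in the abelian group $\bar G$, so $\bar x=\bar a^{\,p}\bar b^{\,q}$ for some integers $p,q$; since $\bar a^{\,p}-1\in\alpha R$ and $\bar b^{\,q}-1\in\beta R$, the identity $\bar x-1=\bar a^{\,p}(\bar b^{\,q}-1)+(\bar a^{\,p}-1)$ gives $\bar x-1\in I$, where $I:=\alpha R+\beta R$.

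The heart of the proof is then a degree count. As $R$ is commutative and $\bar x-1\in I$, we have $(\bar x-1)^{2l}\in I^{2l}$, and $I^{2l}=\sum_{i+j=2l}\alpha^i\beta^j R$. For each monomial here $\max(i,j)\ge l$, so $\alpha^i\beta^j\in\alpha^l R+\beta^l R$; using commutativity together with $\alpha^lA=\beta^lA=0$, this ideal annihilates $A$, whence $(\bar x-1)^{2l}A=0$. Translating back, for every $g\in G$ the element $c=[g,x]$ lies in $A$, so
\begin{equation*}
[g,\underbrace{x,\dots,x}_{2l+1}]=c\,(\bar x-1)^{2l}=1,
\end{equation*}
which is exactly the assertion that $x$ is $(2l+1)$-Engel. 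The surplus of one in the bound $2l+1$ reflects that a single commutation is spent reaching $A$, after which $2l$ factors suffice because among any $i+j=2l$ of them at least $l$ coincide with $\alpha$ or with $\beta$.

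I expect the only genuinely delicate point to be the passage to the commutative ring $R$: it is the commutativity of $\mathbf{Z}[G/G']$, a direct consequence of metabelianness, that lets me rearrange the factors $\alpha$ and $\beta$ freely and run the pigeonhole argument on exponents. In a non-metabelian setting these factors would not commute and the clean bound $2l+1$ would be lost. The remaining ingredients, namely the module interpretation of commutators and the elementary containments $\bar a^{\,p}-1\in\alpha R$, are routine.
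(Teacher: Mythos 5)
Your argument is correct, and it reaches the bound $2l+1$ by a different route from the paper's. The paper's proof is a two-line application of Fitting's theorem: since $G'$ is abelian and $[G',{}_{l}a]=1$, the subgroup $G'\langle a\rangle$ is normal in $G$ (it contains $G'$) and nilpotent of class at most $l$, likewise $G'\langle b\rangle$, so by Fitting's theorem the product $G'\langle a,b\rangle$ is nilpotent of class at most $2l$; as $[g,x]\in G'\leq G'\langle a,b\rangle$ for every $g\in G$ and $x\in\langle a,b\rangle$, the Engel identity of length $2l+1$ follows. Your group-ring computation is, in effect, Fitting's theorem unwound in the metabelian setting: the pigeonhole on the exponents $i+j=2l$ (forcing $\max(i,j)\geq l$) is exactly the combinatorial core of the proof of Fitting's theorem, with the commutativity of $\mathbf{Z}[G/G']$ playing the role that normality of the two factors plays there. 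What your version buys is self-containedness and a transparent mechanism --- the ideal $\alpha^{l}R+\beta^{l}R$ annihilating $G'$ --- and it visibly isolates where metabelianness is used. What the paper's version buys is brevity and the formally stronger conclusion that the whole subgroup $G'\langle a,b\rangle$ is nilpotent of class at most $2l$, which is closer to the form in which the lemma is actually invoked later (Lemma~\ref{engel} needs nilpotency of a semidirect product, not merely an Engel condition on individual elements). Both arguments read ``$l$-Engel element'' as the left Engel condition $[g,{}_{l}a]=1$ for all $g\in G$, which is the interpretation the paper relies on throughout, so there is no mismatch there.
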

\begin{proof} Note that both subgroups $G'\langle a\rangle$ and $G'\langle b\rangle$ are normal in $G$ and nilpotent of class at most $l$. Thus, by Fitting theorem, the subgroup $G'\langle a,b\rangle$ is nilpotent of class at most $2l$ and the lemma follows.
\end{proof}

The following observation will be useful.
\begin{lemma}\label{yes} Suppose that $a$ is an element of a group $G$ such that $|a|_k\leq n$. Then $[G:C_G(b)]\leq n$ for any $b\in X_{k-1}(a)$. Moreover, $[G:C_G(d)]\leq n^2$ for any $d\in X_k(a)$. Consequently, the index $[G:C_G(X_k(a))]$ is at most $n^3$.
\end{lemma}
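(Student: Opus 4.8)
The plan is to prove the three assertions in order, the first two being short and feeding directly into the third. For the first, I would observe that for $b\in X_{k-1}(a)$ the map $y\mapsto [b,y]$ records the conjugates of $b$, since $b^y=b[b,y]$ shows that $b^{y}=b^{y'}$ exactly when $[b,y]=[b,y']$; hence $[G:C_G(b)]=|b^G|$ equals the number of distinct commutators $[b,y]=[a,x_1,\dots,x_{k-1},y]$. All of these lie in $X_k(a)$, so this number is at most $|X_k(a)|=|a|_k\le n$. For the second, I would write a given $d\in X_k(a)$ as $d=[b,x]=b^{-1}b^x$ with $b\in X_{k-1}(a)$ and $x\in G$; then $C_G(b)\cap C_G(b^x)\le C_G(d)$, and since $b^x$ is conjugate to $b$ the first part gives $[G:C_G(b^x)]=[G:C_G(b)]\le n$, so the two index-$\le n$ centralizers meet in index at most $n^2$ and $[G:C_G(d)]\le n^2$.

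For the third assertion I would begin from $C_G(X_k(a))=\bigcap_{d\in X_k(a)}C_G(d)$. Because $X_k(a)$ has at most $n$ elements, this is a \emph{finite} intersection of at most $n$ subgroups, each of index at most $n^2$ by the second part; in particular $C_G(X_k(a))$ immediately has $(n)$-bounded index, which is already enough for the qualitative statements. To reach the stated bound $n^3=n\cdot n^2$ the point is to avoid multiplying all $\le n$ indices together: the target shape $n\cdot n^2$ suggests exhibiting a single $b\in X_{k-1}(a)$ and a single $d\in X_k(a)$ whose joint centralizer $C_G(b)\cap C_G(d)$ is already contained in $C_G(X_k(a))$, for then $[G:C_G(X_k(a))]\le[G:C_G(b)]\,[G:C_G(d)]\le n\cdot n^2=n^3$.

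The main obstacle is precisely this reduction, i.e. showing that centralizing a bounded controlling set forces centralizing the whole of $X_k(a)$. The natural handle is that for fixed $b\in X_{k-1}(a)$ the subgroup $C_G(b)$ permutes the finite set $\{[b,y]:y\in G\}=b^{-1}b^G\subseteq X_k(a)$, because $[b,y]^h=[b,y^h]$ whenever $h\in C_G(b)$, and moreover $C_G(b)\cap C_G(\{[b,y]:y\in G\})=C_G(b^G)$. I would try to use these orbit relations to collapse the many a priori independent conditions defining $C_G(X_k(a))$ down to the two factors contributed by one $(k-1)$-value and one $k$-value. Managing this collapse — making sure the index stays within $n^3$ rather than growing like a product over all elements of $X_k(a)$ — is the delicate heart of the argument; the rest is bookkeeping resting on the first two parts.
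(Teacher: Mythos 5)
Your first two assertions are argued exactly as in the paper, and correctly: from $b^y=b[b,y]$ the conjugates of $b\in X_{k-1}(a)$ are in bijection with the commutators $[b,y]\in X_k(a)$, so $[G:C_G(b)]\le n$; and writing $d=[b,x]=b^{-1}b^x$ gives $C_G(b)\cap C_G(b^x)\le C_G(d)$ with both factors of index at most $n$, so $[G:C_G(d)]\le n^2$. Nothing to change there.

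The third assertion is where your proposal has a genuine gap. You correctly note that intersecting the at most $n$ centralizers $C_G(d)$ only yields $(n^2)^n$, and you then propose to recover $n^3$ by finding a single $b\in X_{k-1}(a)$ and a single $d\in X_k(a)$ with $C_G(b)\cap C_G(d)\le C_G(X_k(a))$. You never carry this reduction out (``I would try to use these orbit relations\dots''), and there is no reason such a controlling pair should exist in general; as written the claim is unproved. The paper's justification is simply a count of conjugates: since $|X_k(a)|\le n$ and each $d\in X_k(a)$ has at most $n^2$ conjugates by the second part, the set of all conjugates of all elements of $X_k(a)$ has at most $n\cdot n^2=n^3$ elements. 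That count is what the constant $n^3$ is actually used for later (in Lemma \ref{product} the factor $[a_1,u_1,\dots,u_k]$ with $a_1\in a^G$ is a conjugate of an element of $X_k(a)$ and hence ranges over at most $n^3$ values); wherever a centralizer of the whole set is needed afterwards, only the finiteness or boundedness of its index matters, for which your intersection bound already suffices. So replace the speculative ``collapse'' by the direct conjugate count, and do not try to squeeze the literal index $[G:C_G(X_k(a))]$ below $n^{2n}$ -- the argument does not depend on that.
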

\begin{proof} The centralizer of every element in $X_{k-1}(a)$ has index at most $n$ in $G$ because the set $[X_{k-1}(a),G]$ has at most $n$ elements. Choose $d\in X_k(a)$. Since $d$ is a product of an element from $X_{k-1}(a)$ and an inverse of a conjugate of such element, it follows that $[G:C_G(d)]\leq n^2$. By hypotheses, $|X_k(a)|\leq n$ and so there are at most $n^3$ conjugates of elements from $X_k(a)$ in $G$. 
\end{proof}

\section{Groups in which conjugacy classes of $\gamma_k$-values are of bounded size}

The purpose of this section is to establish the following result.

\begin{theorem}\label{dadada} Let $k\geq1$ and $G$ be a group in which $|x^G|\leq n$ for any $x\in X_k$. Then $G$ has a nilpotent subgroup of $(k,n)$-bounded index and $(k,n)$-bounded class. 
\end{theorem}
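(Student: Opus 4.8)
The plan is to reduce to the case of finite groups, to recast the conclusion as a bound on $|\gamma_{k+1}(G)|$, and to obtain that bound by controlling $[\gamma_k(G),G]$ globally rather than one centralizer at a time. I would begin by recording the elementary consequences of the hypothesis. Since every $\gamma_k$-value $x$ has at most $n$ conjugates, $[G:C_G(x)]\le n$ for all $x\in X_k$; writing a $\gamma_{k+1}$-value as $x^{-1}x^g$ with $x\in X_k$ shows, exactly as in Lemma \ref{yes}, that every $\gamma_{k+1}$-value has at most $n^2$ conjugates, so in particular $\gamma_k(G)\le FC(G)$. Moreover $\langle g\rangle$ permutes the at most $n$ conjugates of each $x\in X_k$, so $g^{n!}$ centralizes every $\gamma_k$-value; hence $g^{n!}\in C_G(\gamma_k(G))$ for every $g\in G$, and $Q=G/C_G(\gamma_k(G))$ has exponent dividing $n!$ and acts faithfully on $\gamma_k(G)$. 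Since $C_G(\gamma_k(G))$ acts trivially on each finite class $x^G$, this faithful action factors through an embedding of $Q$ into a product of symmetric groups of degree $\le n$, so $Q$ is residually finite.

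The hypothesis passes to subgroups, so by Lemma \ref{redufini}(1) it suffices to bound $|\gamma_{k+1}(H)|$ on finitely generated $H$, and a standard argument based on the residually finite action above together with Lemma \ref{redufini}(2) lets me pass to finite quotients. I would then reformulate the target as: for finite $G$, the order of $\gamma_{k+1}(G)=[\gamma_k(G),G]$ is $(k,n)$-bounded. This suffices for the stated theorem, because once $N=\gamma_{k+1}(G)$ has bounded order, the centralizer $C=C_G(N)$ has index at most $|\mathrm{Aut}(N)|$, while $C\cap N\le Z(C)$ and $C/(C\cap N)\hookrightarrow G/N$ is nilpotent of class at most $k$; thus $C$ is central-by-nilpotent, hence nilpotent of class at most $k+1$, and it is the required nilpotent subgroup of bounded index and class.

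The crux, and the place where the probabilistic tools of this section are indispensable, is the passage from ``each $\gamma_k$-value has few conjugates'' to ``the whole of $[\gamma_k(G),G]$ is small.'' This is genuinely the main obstacle: although each $C_G(x)$ with $x\in X_k$ has index $\le n$, the subgroup $\gamma_k(G)$ may be generated by unboundedly many $\gamma_k$-values, so their intersection $C_G(\gamma_k(G))$ need not have bounded index, and one cannot proceed centralizer by centralizer. I would invoke the probabilistic machinery to produce a normal subgroup $T$ of $(k,n)$-bounded index for which $[\gamma_k(G),T]$ has $(k,n)$-bounded order; granting this, $\gamma_{k+1}(T)=[\gamma_k(T),T]\le[\gamma_k(G),T]$ is bounded, so $T$ is finite-by-nilpotent with bounded parameters.

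Finally I would assemble the nilpotency, treating the primes dividing $n!$ and those coprime to it separately. For a Sylow $p$-subgroup $P$ of $\gamma_k(G)$ with $p\nmid n!$ the exponent-$n!$ action of $Q$ is coprime, so Lemma \ref{coprime} yields $[P,Q,Q]=[P,Q]$; combined with the bounded Engel behaviour furnished by $g^{n!}\in C_G(\gamma_k(G))$ and, on metabelian sections, Lemma \ref{lll}, this forces the corresponding commutators to collapse, while the finitely many primes dividing $n!$ contribute only a bounded factor. Hall's criterion, Theorem \ref{hall}, then upgrades these facts to nilpotency of bounded class on a subgroup of bounded index. I expect the only non-routine step to be the global control of $[\gamma_k(G),G]$ in the finite case, for which the probabilistic covering results replace the naive intersection of centralizers.
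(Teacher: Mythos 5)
Your elementary observations (bounded centralizer indices for $\gamma_k$- and $\gamma_{k+1}$-values, $G^{n!}\le C_G(\gamma_k(G))$) are correct, and your closing remark that once $\gamma_{k+1}(T)$ is bounded for some normal $T$ of bounded index the theorem follows is also fine. But the step you flag as ``the only non-routine step'' is not a step you prove -- it is the entire theorem, and in a strictly stronger form than what is being asked. You propose to produce a normal subgroup $T$ of $(k,n)$-bounded index with $[\gamma_k(G),T]$ of $(k,n)$-bounded order, i.e.\ essentially to bound $|\gamma_{k+1}(G)|$. Under the present hypothesis ($|x^G|\le n$ only for $x\in X_k$) this is not known and is not what the paper proves: the cited result \cite{dms} gives only that $\gamma_k(G)'$ is bounded, and the paper obtains a bound on $\gamma_{k+1}(G)$ only under the stronger hypothesis $|x|_k\le n$ for all $x\in G$ (Theorem \ref{main}), and only \emph{after} Theorem \ref{dadada} is available, via a further induction on the nilpotency class, Lemma \ref{final}, and \cite{glasgow}. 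Deriving Theorem \ref{dadada} from such a $T$ is therefore circular in effect.

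The tool you invoke cannot deliver that $T$. The result of \cite{DS} requires a subgroup $K$ with $Pr(K,G)\ge 1/n$, and $K=\gamma_k(G)$ does not qualify: its elements are \emph{products} of $\gamma_k$-values and may have arbitrarily large conjugacy classes, so $Pr(\gamma_k(G),G)$ need not be bounded away from $0$. One can only apply \cite{DS} to subgroups consisting of $\gamma_k$-values, such as $N_{\bf g}=[N,g_1,\dots,g_{k-1}]$ for $N$ abelian normal, and even then the output is a subgroup $B_{\bf g}$ of bounded \emph{index} in $N_{\bf g}$ (not bounded order complement) with $[T_{\bf g},B_{\bf g}]$ bounded -- one such pair for each of unboundedly many tuples ${\bf g}$, whose associated $T_{\bf g}$ cannot be intersected. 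The whole substance of the paper's proof (the maximal-index stabilization of the centralizers $C_{u,g}$, the Engel arguments of Lemmas \ref{lll} and \ref{engel}, the separate coprime case via Lemma \ref{coprime}, and the induction on derived length with \cite{khuma} and Theorem \ref{hall}) exists precisely to circumvent the absence of a single global $T$; your sketch replaces all of it with an assertion. A smaller issue: your reduction to finite groups needs $G$ itself residually finite, which the paper secures by first making $\gamma_k(G)$ abelian via \cite{dms} and then quoting Hall \cite{hall2}; residual finiteness of $G/C_G(\gamma_k(G))$ alone does not suffice.
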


It is unclear whether under the hypotheses of the above theorem $G$ in fact has a nilpotent subgroup of $(k,n)$-bounded index and $k$-bounded class. If $k=1$, by B. H. Neumann's theorem $G$ has a class-two nilpotent normal subgroup of $n$-bounded index. If $k=2$, the main result of \cite{ebeshu} implies that $G$ has a class-four nilpotent normal subgroup of $n$-bounded index.

If $K$ is a subgroup of a finite group $G$, the commuting probability $Pr(K,G)$ of $K$ in $G$ is the probability that a random element of $K$ commutes with a random element of $G$. Note that if $|x^G|\leq n$ for every $x\in K$, then $Pr(K,G)\geq\frac{1}{n}$. It was shown in \cite{DS} (see also the asymmetric Neumann theorem in \cite[Theorem 4.3]{ebeshu}) that if $Pr(K,G)\geq\epsilon>0$, then there is a normal subgroup $T\leq G$ and a subgroup $B\leq K$ such that the indices $[G:T]$ and $[K:B]$ and the order of the commutator subgroup $[T,B]$ are $\epsilon$-bounded.

Recall that an element $x$ of a group $G$ is called (left) $l$-Engel if $[y,{}_lx]=1$ for every $y\in G$. The group $G$ is said to be $l$-Engel if so are the elements of $G$.

\begin{proof}[Proof of Theorem \ref{dadada}] As was mentioned, the result holds true if $k\leq2$ so we assume that $k\geq3$. By \cite{dms} the derived group $\gamma_k(G)'$ has $n$-bounded order. Factoring out $\gamma_k(G)'$ we may assume that $\gamma_k(G)$ is abelian and so $G$ is abelian-by-nilpotent. The classical theorem of Hall \cite{hall2} states that finitely generated abelian-by-nilpotent groups are residually finite. Thus, by virtue of Lemma \ref{redufini}, it is sufficient to prove the result for finite quotients of finitely generated subgroups of $G$. Therefore without loss of generality we may assume that $G$ is finite and $\gamma_k(G)$ is abelian.

Assume first that $G$ is metabelian. Let $N$ be a maximal abelian normal subgroup containing $G'$. Set $${\bf G}=G\times\dots\times G\ \ (k-1\text{ factors}).$$ If ${\bf g}=(g_1,\dots,g_{k-1})\in{\bf G}$, the subgroup $N_{\bf g}=[N,g_1,\dots,g_{k-1}]$ consists of $\gamma_k$-values, that is $N_{\bf g}\subseteq X_k$. Hence, $Pr(N_{\bf g},G)\geq\frac{1}{n}$. By \cite{DS} there is a normal subgroup $T_{\bf g}$ and a subgroup $B_{\bf g}\leq N_{\bf g}$ such that the indices $[G:T_{\bf g}]$ and $[N_{\bf g}:B_{\bf g}]$ and the order of $[T_{\bf g},B_{\bf g}]$ are $n$-bounded. Note that $[T_{\bf g},B_{\bf g}]$ is normal in $T_{\bf g}$ and so the normal closure $\langle[T_{\bf g},B_{\bf g}]^G\rangle$ has $n$-bounded order. We emphasize that the bounds here do not depend on the choice of ${\bf g}\in{\bf G}$.

 The proof now breaks into several cases.

1. The case where $G$ is a metabelian $p$-group. It follows that there is an $n$-bounded number $e$ such that $Z_e(G)$ contains $\langle[T_{\bf g},B_{\bf g}]^G\rangle$ for every ${\bf g}\in {\bf G}$. Here and throughout $Z_i(H)$ denotes the $i$th term of the upper central series of a group $H$. Pass to the quotient $G/Z_e(G)$ and assume that $[T_{\bf g},B_{\bf g}]=1$ for any ${\bf g}\in {\bf G}$. As the index $[N_{\bf g}:B_{\bf g}]$ is $n$-bounded, there are $n$-boundedly many $\gamma_k$-values $x_1,\dots,x_j$ such that $N_{\bf g}=\langle B_{\bf g},x_1,\dots,x_j\rangle$. Set $C_{\bf g}=C_G(N_{\bf g})$. Since $C_{\bf g}$ contains the intersection of $T_{\bf g}$ and all centralizers $C_G(x_i)$, it follows that the index of $C_{\bf g}$ is $n$-bounded. Since $G$ is metabelian, the subgroups $N_{\bf g}$ and $C_{\bf g}$ are normal in $G$. 

For $g\in G$ and $i=1,2,\dots$ write $N_{i,g}$ for $[N,{}_ig]$ and $C_{i,g}$ for $C_G(N_{ig})$. 
Let $\beta_i$ be the maximal index of $C_{i,g}$ , where $g$ ranges over $G$. It is clear that $\beta_1\geq\beta_2\geq\beta_3\geq\dots$. Since $\beta_{k-1}$ is $(k,n)$-bounded, there is a $(k,n)$-bounded number $u$ such that $k-1\leq u$ and $\beta_u=\beta_{2u}$.

Choose $g\in G$ such that $C_{u,g}=C_{2u,g}$ is of index $\beta_u$ in $G$. We have $C_{u,g}=C_{j,g}$ for $j=u,u+1,\dots,2u$. Let $h\in C_{u,g}$. Note that $$N_{j,g}=[N_{u,g},{}_{j-u}g]=[N_{u,g},{}_{j-u}gh].$$ In particular, $$N_{(2u),g}=[N_{u,g},{}_u(gh)].$$ This shows that $C_{u,g}=C_{(2u),g}$ contains $C_{u,gh}$. Since $\beta_u$ is maximal and since $h$ was chosen in $C_{u,g}$ arbitrarily, we are forced to conclude that $$C_{u,g}=C_{u,gh} \text{ whenever } h\in C_{u,g}.$$ Set $C_{u,g}=D$ and note that $D$ centralizes $N_{u,gh}$ for any $h\in D$. 

Obviously, $D$ is a normal subgroup of index $\beta_u$ in $G$. Let $\overline{G}=G/Z(D)$. Since $Z(D)$ contains $N_{u,(gh)}=[N,(gh),\dots,(gh)]$, we conclude that $\overline{gh}$ is $(u+k)$-Engel in $\overline{G}$ for any $h\in D$. We deduce from Lemma \ref{lll} that $D$ is $l$-Engel for $l=2(u+k)+2$. Let us show that $D$ is nilpotent of $(k,n)$-bounded class.

Fix ${\bf g}$ and let $\bar D$ denote the image of $D$ in $G/C_{\bf g}$. Note that $\bar D$ naturally acts on $N_{\bf g}$ and the semidirect product of $N_{\bf g}$ by $\bar D$ is an $l$-Engel group having an abelian normal subgroup of $n$-bounded index. So it is nilpotent of $(k,n)$-bounded class, say $c$. This means that $N_{\bf g}\leq Z_c(D)$ for each ${\bf g}\in{\bf G}$. In particular, $[G',g_1,\dots,g_{k-1}]\leq Z_c(D)$ and therefore $\gamma_{k+1}(G)\leq Z_c(D)$. It follows that $D$ is nilpotent of class at most $k+c$.

Thus, in the case where $G$ is a metabelian $p$-group the result follows. 

2. The coprime case. Here we assume that $G/N$ is an abelian $p$-group while $N$ is a $p'$-group. Recall that $\langle[T_{\bf g},B_{\bf g}]^G\rangle$ has $n$-bounded order for any ${\bf g}\in {\bf G}$. It follows that $C_G(\langle[T_{\bf g},B_{\bf g}]^G\rangle)$ has $n$-bounded index in $G$ and so we may assume that $T_{\bf g}$ centralizes $\langle[T_{\bf g},B_{\bf g}]^G\rangle$. Then, because of Lemma \ref{coprime}, $T_{\bf g}$ centralizes $B_{\bf g}$. Similarly we may assume that $T_{\bf g}$ centralizes $N_{\bf g}/B_{\bf g}$, whence, again because of Lemma \ref{coprime}, $T_{\bf g}$ centralizes $N_{\bf g}$. So we simply assume that $B_{\bf g}=N_{\bf g}$ and $T_{\bf g}=C_G(N_{\bf g})$. Note that if $g\in G$ and ${\bf g}=(g,\dots,g)\in {\bf G}$, then $N_{\bf g}=[N,g]$ so in what follows we write $N_g$ in place of $N_{\bf g}$ and $T_g$ in place of $T_{\bf g}$.

Choose $g\in G$ such that the index $[G:T_g]$ is maximal. Let $h\in T_g$. Because of Lemma \ref{coprime} we have $$N_g=[N_g,g]=[N_g,gh]\leq N_{gh}.$$ It follows that $T_{gh}\leq T_g$ for any $h\in T_g$. Since $[G:T_g]$ is maximal, deduce that $T_{gh}=T_g$ for any $h\in T_{\bf g}$. So $T_g$ centralizes both $N_g$ and $N_{gh}$. It is easy to see that $N_h\leq N_gN_{gh}$, so $T_g$ centralizes $N_h$. This holds for every $h\in T_g$ and so $[N,T_g,T_g]=1$. In view of Lemma \ref{coprime} deduce that $[N,T_g]=1$. Therefore $T_g$ is abelian and this proves the theorem in the coprime metabelian case. 

3. The general metabelian case. Let $e=n!$. Then $G^e$ centralizes $\gamma_k(G)$ and so is nilpotent of class at most $k$. For each prime $p\leq n$ let $G_p$ be the preimage in $G$ of the Sylow $p$-subgroup of $G/N$. For $p>n$, the Sylow $p$-subgroups of $G$ are contained in $G^e$. Hence $G=G^e\prod_{p\leq n}G_p$. Therefore by Fitting's theorem it suffices to prove the result for $G=G_p$. Thus we may assume that $G/N$ is a $p$-group. Let P be a Sylow p-subgroup of G. Then $G=NP$. By the result on $p$-groups, we can replace $P$ with a subgroup of bounded index and bounded class, so without loss of generality $P$ has $(k,n)$-bounded class $c$ say. It follows that $[P\cap N,{}_cG]=1$ and  so $P\cap N\leq Z_c(G)$. Factoring out $Z_c(G)$, we may thus assume that $N$ is a $p'$-group. By the result in the coprime case we are done.

The general case. We now drop the assumption that $G$ is metabelian. Since the derived length of $G$ is at most $k+1$, we can use the induction on the derived length. By induction, $G'$ has a nilpotent subgroup $A$ of $(k,n)$-bounded index and  $(k,n)$-bounded class. In view of \cite{khuma} we can assume that $A$ is characteristic in $G'$ and therefore normal in $G$. Applying Theorem \ref{hall} we can assume that $A$ is abelian.

If $A=G'$, then $G$ is metabelian and the result holds. We therefore assume that $A<G'$. Further, replacing $G$ by $C_G(G'/A)$ we have that $G/A$ is nilpotent of class at most 2. Fix a set $X$ of size at most $|G'/A|$ such that $G'$ is generated by $A$ and $X$. Set $${\bf X}=X\times\dots\times X\ \ (k-1\text{ factors}).$$ If ${\bf x}=(x_1,\dots,x_{k-1})\in{\bf X}$, the subgroup $A_{\bf x}=[A,x_1,\dots,x_{k-1}]$ consists of $\gamma_k$-values, that is $A_{\bf x}\subseteq X_k$. Hence, $Pr(A_{\bf x},G)\geq\frac{1}{n}$. By \cite{DS} there is a normal subgroup $T_{\bf x}$ and a subgroup $B_{\bf x}\leq A_{\bf g}$ such that the indices $[G:T_{\bf x}]$ and $[A_{\bf x}:B_{\bf x}]$ and the order of $[T_{\bf x},B_{\bf x}]$ are $n$-bounded. Note that the subgroup $A_{\bf x}$ is normal in $G$. We therefore can choose $B_{\bf x}$ normal in $G$. Let $H_{\bf x}$ denote the stabilizer of the series $$1\leq[T_{\bf x},B_{\bf x}]\leq B_{\bf x}\leq A_{\bf x}.$$ The subgroup $H_{\bf x}$ contains the intersection of $$C_G([T_{\bf x},B_{\bf x}])\cap T_{\bf x}\cap C_G(A_{\bf x}/B_{\bf x}).$$ We therefore deduce that $H_{\bf x}$ has $(k,n)$-bounded index in $G$. Let $$H=\cap_{{\bf x}\in{\bf X}}H_{\bf x}.$$

Since $|{\bf X}|\leq|G'/A|^{k-1}$, observe that the index of $H$ in $G$ is $(k,n)$-bounded. Therefore it is sufficient to show that $H$ has a nilpotent subgroup of $(k,n)$-bounded index and $(k,n)$-bounded class. Note that $A_{\bf x}\leq Z_3(H)$ for any ${\bf x}\in{\bf X}$. It follows that $H'$ is nilpotent of class at most $k+3$. An application of Hall's Theorem \ref{hall} reduces the problem to the case where $H'$ is abelian. We already know that for metabelian groups the theorem holds. This completes the proof.
\end{proof}

\section{Proof of Theorem \ref{main}}

Let $G$ be a group satisfying the hypotheses of Theorem \ref{main}. We need to show that the order of $\gamma_{k+1}(G)$ is bounded in terms of $k$ and $n$ only. It is immediate from Lemma \ref{yes} that $|x^G|\leq n$ for any $\gamma_k$-value $x\in X_k$. By \cite{dms} the derived group $\gamma_k(G)'$ has $n$-bounded order. Factoring out $\gamma_k(G)'$ we may assume that $\gamma_k(G)$ is abelian and so $G$ is abelian-by-nilpotent. The combination of  Hall's theorem \cite{hall2} with Lemma \ref{redufini} enables us to assume that $G$ is finite.

Now we wish to show that the nilpotent residual $M=\gamma_\infty(G)$ has $(k,n)$-bounded order. Evidently, it is sufficient to show that the orders of the Sylow $p$-subgroups $P$ of $M$ are uniformly $(k,n)$-bounded. Note that since $G$ is metanilpotent, $P=[P,H]$ where $H$ is a Hall $p'$-subgroup of $G$ (see for example \cite[Lemma 2.4]{ast}). Denote by $F=F(G)$ the Fitting subgroup of $G$. According to Theorem \ref{dadada} the index $[G:F]$ is $(k,n)$-bounded so we will use induction on $[G:F]$. If $[G:F]=1$, then $G$ is nilpotent and $P=1$. We therefore assume that $F\neq G$. Since $G$ is soluble, there is a normal subgroup $T<G$ such that $F\leq T$ and the index $[G:T]=q$ is a prime. As the index of $F$ in $T$ is strictly smaller than in $G$, by induction the order of $\gamma_\infty(T)$ is $(k,n)$-bounded. Passing to the quotient $G/\gamma_\infty(T)$ we can assume that $\gamma_\infty(T)=1$ and so $F=T$ has prime index $q$ in $G$. Remark that under our assumptions necessarily $p\neq q$. Choose a $q$-element $u\in G$ such that $G=F\langle u\rangle$. Note that $$P=[P,u]=[P,{}_ku].$$ Since $P$ is abelian, for every $g\in P$ we have $[g,u]=[u,g^{-1}]$. Deduce that $[P,{}_ku]=[u,P,{}_{k-1}u]\leq X_k(u)$. This shows that the order of $P$ is at most $n$. So indeed $M$ has $(k,n)$-bounded order. Passing to the quotient $G/M$ without loss of generality assume that $G$ is nilpotent.

Our next goal is to show that the nilpotency class of $G$ is $(k,n)$-bounded.  By Theorem \ref{dadada} $G$ has a nilpotent normal subgroup $L$ of $(k,n)$-bounded index and $(k,n)$-bounded class. If $G=L$ we have nothing to prove and so we assume that $L$ is a proper subgroup of $G$. Arguing by induction on the index $[G:L]$ we assume that any proper normal subgroup $B$ such that $L\leq B<G$ is nilpotent of $(k,n)$-bounded class. Therefore without loss of generality we can assume that $L$ has prime index.

By virtue of Hall's Theorem \ref{hall} we can assume that $L$ is abelian. Let $g\in G\setminus L$ so that $G=L\langle g\rangle$. Choose $l\in L$. Taking into account that $L$ is an abelian normal subgroup write $$[g,{}_kgl]=[g,l,{}_{k-1}g]=[l^{-1},{}_kg].$$ The set $K$ of all commutators $[l^{-1},{}_kg]$, where $l$ ranges over $L$, is a subgroup. By hypothesis there are at most $n$ elements of the form $[g,{}_kgl]$ so we conclude that the order of the subgroup $K$ is at most $n$. Factoring out $K$ we may assume that $[L,{}_kg]=1$, in which case $G$ is nilpotent of class at most $k$. Hence, we have proved that $G$ has $(k,n)$-bounded class.

It remains to show that $\gamma_{k+1}(G)$ has $(k,m)$-bounded order. We will use induction on the nilpotency class of $G$. Thus, by induction, $\gamma_{k+1}(G/Z(G))$ has $(k,m)$-bounded order. By \cite[Theorem B]{guma} and the remark in the end of \cite{guma} deduce that $\gamma_{k+2}(G)$ has $(k,m)$-bounded order. We factor out $\gamma_{k+2}(G)$ and without loss of generality assume that $\gamma_{k+1}(G)$ is in the centre of $G$. For $a\in G$ and $i=1,\dots,k$ let $Y_i(a)$ denote the set of elements of the form $[a,x_1,\dots,x_{k+1-i}]$, where at least one of the elements $x_1,\dots,x_{k+1-i}$ belongs to $X_i$. Note that $Y_1(a)$ is precisely the set $X_k(a)$. By hypothesis, $|Y_1(a)|\leq n$ for any $a\in G$.

We claim that for any $i\leq k$ and $a\in G$ the size of the set $Y_i(a)$ is at most $n^{2^{i-1}}$. 

This will be established by induction on $i$. Since the claim holds for $i=1$, assume that $i\geq2$. Choose $[a,x_1,\dots,x_{k+1-i}]\in Y_i(a)$. At least one of the elements $x_1,\dots,x_{k+1-i}$ lies in $X_i$. For the sake of clarity without loss of generality assume that $x_{k+1-i}\in X_i$ and write $x_{k+1-i}=[y_1,y_2]$ for suitable $y_1\in X_{i-1}$ and $y_2\in G$. Set $x=[a,x_1,\dots,x_{k-i}]$. We have $$[a,x_1,\dots,x_{k+1-i}]=[x,x_{k+1-i}]=[x,[y_1,y_2]].$$ Taking into account that $\gamma_{k+1}(G)\leq Z(G)$ we obtain $$[x,[y_1,y_2]]=[x,y_1,y_2][y_2,x,y_1]=[x,y_1,y_2][x,{y_2}^{-1},y_1].$$ Observe that both factors $[x,y_1,y_2]$ and $[x,{y_2}^{-1},y_1]$ belong to $Y_{i-1}(a)$, which by induction is of size at most $n^{2^{i-2}}$. Therefore $[a,x_1,\dots,x_{k+1-i}]$ can take at most $n^{2^{i-1}}$ values, as claimed.

We can now complete the proof of Theorem \ref{main}. The fact that $|Y_k(a)|\leq n^{2^{k-1}}$ means that $|a^{X_k}|\leq n^{2^{k-1}}$ for any $a\in G$. According to \cite[Corollary 1.3]{glasgow} this implies that $\gamma_{k+1}(G)$ has finite $(k,n)$-bounded order, as required. This completes the proof of Theorem \ref{main}.

\section{$FC_k$-groups} 

Here we will prove Theorems \ref{main3} and \ref{main2}. We will start with some general observations concerning elements in $FC_k(G)$. The next lemma shows that $FC_k(G)$ is a subgroup.

\begin{lemma}\label{product} Let $G$ be a group and assume that $a,b\in FC_k(G)$. Then $|a^{-1}|_k=|a|_k$ and  $|ab|_k\leq{|a|_k}^3|b|_k$. 
\end{lemma}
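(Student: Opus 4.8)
The plan is to deduce both assertions from two elementary commutator identities: the \emph{inversion identity} $[z^{-1},x]=\bigl([z,x]^{-1}\bigr)^{z^{-1}}$ and the \emph{conjugation identity} $[z^{g},x]=[z,x^{g^{-1}}]^{g}$, both checked by direct expansion. Iterating the second one along an entire commutator chain yields $X_{i}(z^{g})=X_{i}(z)^{g}$ for every $i$ and every $z,g\in G$, a fact I will invoke repeatedly; note also the standard product rule $[pq,x]=[p,x]^{q}[q,x]$. All the content lies in organising the consequences of these identities, since everything in sight is finite: $a,b\in FC_k(G)$ means $X_k(a)$ and $X_k(b)$ are finite, and Lemma \ref{yes} (applied with $n=|a|_k$) gives $[G:C_G(u)]\leq{|a|_k}^2$, hence $|u^G|\leq{|a|_k}^2$, for every $u\in X_k(a)$.

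For the equality $|a^{-1}|_k=|a|_k$ I would first record the transparent case $k=1$: the inversion identity gives $X_1(a^{-1})=\bigl(X_1(a)^{-1}\bigr)^{a^{-1}}$, and as inversion and conjugation by $a^{-1}$ are bijections of $G$ this yields $|a^{-1}|_1=|a|_1$. For larger $k$ I would peel off the first variable. Writing $c=[a,x_1]$ and $c'=[a^{-1},x_1]=(c^{-1})^{a^{-1}}$, one has $X_k(a)=\bigcup_{c\in X_1(a)}X_{k-1}(c)$, while the conjugation identity turns $X_{k-1}(c')=X_{k-1}\bigl((c^{-1})^{a^{-1}}\bigr)$ into $X_{k-1}(c^{-1})^{a^{-1}}$, so that $X_k(a^{-1})=\bigl(\bigcup_{c\in X_1(a)}X_{k-1}(c^{-1})\bigr)^{a^{-1}}$. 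Conjugation by $a^{-1}$ being a bijection, the problem reduces to showing that the two unions $\bigcup_{c}X_{k-1}(c)$ and $\bigcup_{c}X_{k-1}(c^{-1})$, taken over the finite set $c\in X_1(a)$, have the same size.

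I expect this comparison to be the main obstacle. Term by term the inductive hypothesis supplies $|X_{k-1}(c^{-1})|=|X_{k-1}(c)|$ (each $X_{k-1}(c)\subseteq X_k(a)$ is finite, so $c\in FC_{k-1}(G)$), but the two unions may overlap differently: the bijections produced by induction involve conjugators depending on $c$, and conjugation does \emph{not} preserve the sets $X_{k-1}(c)$, which are not normal. A naive piecewise matching therefore does not control the exact cardinality. The route I would take to close this gap is to track the correspondence at the level of the parametrising tuples in $G^{k}$ and verify that it descends to a genuine bijection of the image sets, exploiting the perfect symmetry between $a$ and $a^{-1}$ (it is enough to obtain one injection, since replacing $a$ by $a^{-1}$ then furnishes the reverse injection and hence equality of these finite sets).

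The inequality $|ab|_k\leq{|a|_k}^{3}|b|_k$ is the cleaner half. By induction on $m$ I would establish the factorisation
\[
[ab,x_1,\dots,x_m]=w_m\,v_m,
\]
where $w_m$ is a conjugate of an element of $X_m(a)$ and $v_m\in X_m(b)$. The base case is the product rule $[ab,x]=[a,x]^{b}[b,x]$. For the step, from $[ab,x_1,\dots,x_m]=[w_{m-1}v_{m-1},x_m]=[w_{m-1},x_m]^{v_{m-1}}[v_{m-1},x_m]$ I would use $w_{m-1}=u^{g}$ with $u\in X_{m-1}(a)$ and the conjugation identity to write $[w_{m-1},x_m]=[u,x_m^{g^{-1}}]^{g}$, where $[u,x_m^{g^{-1}}]\in X_m(a)$; thus $w_m:=[w_{m-1},x_m]^{v_{m-1}}$ is again a conjugate of an $X_m(a)$-element, while $v_m:=[v_{m-1},x_m]\in X_m(b)$. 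Taking $m=k$, the element $w_k$ ranges over the conjugates of elements of $X_k(a)$, of which there are at most $\sum_{u\in X_k(a)}|u^G|\leq|a|_k\cdot{|a|_k}^{2}={|a|_k}^{3}$ by Lemma \ref{yes}, and $v_k$ ranges over the at most $|b|_k$ elements of $X_k(b)$. Since the pair $(w_k,v_k)$ determines the product, $|X_k(ab)|\leq{|a|_k}^{3}|b|_k$, as required. Thus the product bound is routine once the factorisation is in place, and the real difficulty of the lemma is concentrated in securing the exact count for the inversion statement.
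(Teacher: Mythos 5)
Your proof of the inequality $|ab|_k\leq{|a|_k}^3|b|_k$ is correct and is essentially the paper's argument: the paper writes $[ab,g_1,\dots,g_k]=[a_1,u_1,\dots,u_k][b,v_1,\dots,v_k]$ with $a_1\in a^G$ and counts the first factor among the at most ${|a|_k}^3$ conjugates of elements of $X_k(a)$ via Lemma \ref{yes}, exactly as you do.

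The first half, however, contains a genuine gap, and you flag it yourself. Your strategy peels off the first variable, reduces to comparing $\bigcup_{c\in X_1(a)}X_{k-1}(c)$ with $\bigcup_{c\in X_1(a)}X_{k-1}(c^{-1})$, and then observes (correctly) that the inductive hypothesis $|X_{k-1}(c^{-1})|=|X_{k-1}(c)|$ cannot control how the pieces of the two unions overlap. The promised repair --- ``track the correspondence at the level of parametrising tuples'' --- is precisely the missing content, not a routine verification: a cardinality statement is simply the wrong induction hypothesis here. The fix is to strengthen the induction to a \emph{set-level} identity, which is what the paper does and which makes the union decomposition unnecessary. Namely, one shows by induction on $k$ that $[a^{-1},x_1,\dots,x_k]=[a,t_1,\dots,t_k]^{-1}$ for suitable $t_1,\dots,t_k$: the base case is your inversion identity rewritten as $[a^{-1},x]=\bigl([a,x]^{a^{-1}}\bigr)^{-1}=[a,x^{a^{-1}}]^{-1}$, and if $[a^{-1},x_1,\dots,x_{k-1}]=c^{-1}$ with $c=[a,t_1,\dots,t_{k-1}]$, then $[c^{-1},x_k]=\bigl([c,x_k]^{c^{-1}}\bigr)^{-1}=[c,x_k^{c^{-1}}]^{-1}$. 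This gives $X_k(a^{-1})\subseteq X_k(a)^{-1}$ in one stroke; applying the same containment to $a^{-1}$ yields $X_k(a^{-1})=X_k(a)^{-1}$, and since inversion is a bijection, $|a^{-1}|_k=|a|_k$. Your closing remark that one injection suffices by symmetry is the right instinct, but the injection itself has to come from this stronger identity, which your write-up never produces.
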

\begin{proof} For any $x\in G$ we have $[a^{-1},x]=([a,x]^{-1})^{a^{-1}}$.  A straightforward induction on $k$ shows that for any $x_1,\dots,x_k$ there are elements $t_1,\dots,t_k\in G$ such that $$[a^{-1},x_1,\dots,x_k]=[a,t_1,\dots,t_k]^{-1},$$ whence we deduce that $X_k(a^{-1})=X_k(a)^{-1}$ and so $|a^{-1}|_k=|a|_k$.

To prove the other statement, choose $g_1,\dots,g_k\in G$. We have $[ab,g_1]=[a,g_1]^b[b,g_1]$. By induction on $k$ one easily proves that there are elements $a_1\in a^G$ and $u_1,\dots,u_k,v_1,\dots,v_k$ such that $$[ab,g_1,\dots,g_k]=[a_1,u_1,\dots,u_k][b,v_1,\dots,v_k].$$ Suppose that $|a|_k\leq n$. Lemma \ref{yes} shows that in the above equality the factor $[a_1,u_1,\dots,u_k]$ can take at most $n^3$ values while by hypotheses the factor $[b,v_1,\dots,v_k]$ can take at most $|b|_k$ values, whence the result.
\end{proof}

\begin{lemma}\label{generation} Let $G$ be a group generated by a set $S$. Then $\gamma_{k+1}(G)$ is generated by the conjugates of $X_k(g)$, where $g$ ranges over $S$.
\end{lemma}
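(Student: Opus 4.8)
The plan is to define $N$ to be the subgroup of $G$ generated by all conjugates of the sets $X_k(g)$, with $g$ ranging over $S$, and then to prove $N = \gamma_{k+1}(G)$. Because the generating set of $N$ is by construction closed under conjugation, $N$ will be normal in $G$. The inclusion $N \leq \gamma_{k+1}(G)$ is immediate, since each element of $X_k(g)$ is a $\gamma_{k+1}$-value and $\gamma_{k+1}(G)$ is a normal subgroup; so the whole difficulty lies in the reverse inclusion.

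For that I would pass to the quotient $\overline{G} = G/N$. By the very definition of $N$, each set $X_k(g)$ with $g \in S$ is contained in $N$ (take the trivial conjugate of its elements), and hence maps to the identity in $\overline{G}$. Consequently, for every $g \in S$ and all $x_1, \dots, x_k \in G$ one has $[\overline{g}, \overline{x}_1, \dots, \overline{x}_k] = 1$ in $\overline{G}$. I would then invoke the standard characterization of the upper central series, $Z_k(H) = \{ h \in H : [h, y_1, \dots, y_k] = 1 \text{ for all } y_1, \dots, y_k \in H\}$, which is proved by an easy induction on $k$ starting from $Z_{i+1}(H)/Z_i(H) = Z(H/Z_i(H))$. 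The displayed vanishing of commutators says exactly that the image of every $g \in S$ lies in $Z_k(\overline{G})$.

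Since $Z_k(\overline{G})$ is a subgroup and the images of $S$ generate $\overline{G}$, it follows that $\overline{G} = Z_k(\overline{G})$, i.e. $\overline{G}$ is nilpotent of class at most $k$, so $\gamma_{k+1}(\overline{G}) = 1$ and therefore $\gamma_{k+1}(G) \leq N$. Combined with the trivial inclusion this gives $\gamma_{k+1}(G) = N$, as desired. I do not anticipate a serious obstacle here; the only points deserving care are checking that the generating set of $N$ really is conjugation-closed, so that $N$ is normal and the quotient argument is legitimate, and having the characterization of $Z_k$ at hand, which is the true engine converting the pointwise vanishing of the $k$-fold commutators into nilpotency of class $k$.
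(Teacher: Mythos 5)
Your proposal is correct and follows essentially the same route as the paper: set $N=\langle X_k(g)^G : g\in S\rangle$, note $N\leq\gamma_{k+1}(G)$, observe that each $g\in S$ maps into $Z_k(G/N)$, and conclude that $G/N$ is nilpotent of class at most $k$. You simply spell out the standard commutator characterization of $Z_k$ that the paper leaves implicit.
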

\begin{proof} Let $N=\langle X_k(g)^G \vert g\in S\rangle$. Obviously $N\leq\gamma_{k+1}(G)$. Moreover, the image of any $g\in S$ in $G/N$ belongs to $Z_k(G/N)$. It follows that $G/N$ is nilpotent of class at most $k$ and so $\gamma_{k+1}(G)\leq N$.
\end{proof}

\begin{lemma}\label{abeliansubgr} Assume the hypotheses of Theorem \ref{main2} and let $N$ be an abelian normal subgroup of $G$. Then $[N,{}_kG]$ has finite $(k,n,r)$-bounded order. 
\end{lemma}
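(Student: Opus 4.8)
The plan is to pass to the conjugation action: regard the abelian group $N$ additively as a right module over the integral group ring $\mathbf{Z}G$ and put $t_g=g-1$, so that $[N,{}_jg]=Nt_g^{\,j}$ and $[N,{}_kG]=N\omega^{k}$, where $\omega$ is the augmentation ideal. The starting point is the commutator identity $[g,{}_j(gl)]=[l^{-1},{}_jg]$, which holds for all $l\in N$ precisely because $N$ is abelian and normal (the same computation that is used in Section 4); as $l^{-1}$ ranges over $N$ this gives $[N,{}_jg]\subseteq X_j(g)$ for every $g\in G$, whence $|Nt_g^{\,k}|=|[N,{}_kg]|\le|g|_k$. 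By Lemma \ref{product} any product $h$ of at most $k$ of the generators $a_1,\dots,a_r$ has $|h|_k$ equal to a $(k,n,r)$-bounded number, so each ``pure power'' $Nt_h^{\,k}$ has $(k,n,r)$-bounded order. Since there are only $(k,r)$-boundedly many such words $h$, the subgroup $P\le N$ they generate has $(k,n,r)$-bounded order, and the whole aim is to prove the containment $[N,{}_kG]=N\omega^{k}\le P$.

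The difficulty is that $N\omega^{k}$ is generated not only by the pure powers but by all the mixed monomials $Nt_{g_1}\cdots t_{g_k}$, and controlling the pure powers does not a priori control these. To handle them I would work modulo $N\omega^{k+1}$, where $N\omega^{k}$ is spanned by the images of the degree-$k$ monomials $Nt_{a_{i_1}}\cdots t_{a_{i_k}}$ in the generators. For $S\subseteq\{1,\dots,k\}$ and $h_S=\prod_{i\in S}a_i$ one has the exact expansion $t_{h_S}=\sum_{\emptyset\ne I\subseteq S}\prod_{i\in I}t_{a_i}$, hence $t_{h_S}^{\,k}\equiv\bigl(\sum_{i\in S}t_{a_i}\bigr)^{k}\pmod{\omega^{k+1}}$, and a polarization (inclusion--exclusion over the sets $S$) expresses the symmetric part of any degree-$k$ monomial through the bounded pure powers $Nt_{h_S}^{\,k}\subseteq P$. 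The remaining, non-symmetric part is governed by the ring-commutators $t_{a_i}t_{a_j}-t_{a_j}t_{a_i}$, which in the associated graded ring correspond to the group-commutators $[a_i,a_j]$ --- again products of boundedly many generators, hence again controlled by $P$. Carrying these Lie-type corrections through all degrees should give $N\omega^{k}\subseteq P+N\omega^{k+1}$, and iterating yields $N\omega^{k}\subseteq P+N\omega^{k+m}$ for every $m$.

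The \emph{main obstacle} will be to close this descent, that is, to show that the deep terms $N\omega^{k+m}$ eventually fall into $P$; here I would bring in the centralizer information. Lemma \ref{yes} shows that every element of $X_{k-1}(h)$, with $h$ a short word, has a centralizer of $(k,n,r)$-bounded index, and since an $r$-generated group contains only $(n,r)$-boundedly many subgroups of a given bounded index, the intersection $C$ of all these centralizers is normal of $(k,n,r)$-bounded index in $G$ and annihilates the pure powers one commutation sooner. Using $C$ to pass to finite quotients via Lemma \ref{redufini}, I would try to reduce to the case in which the action of $G$ on $N$ is residually nilpotent, so that $\bigcap_m N\omega^{m}=0$ and the descent terminates with $N\omega^{k}\subseteq P$; justifying this reduction is exactly the delicate point. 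Two technical matters must be absorbed en route: the $k!$ produced by polarization (treated by separating the primes $p\le k$ from the part on which $k!$ is invertible, using that the pure powers have bounded order and hence bounded exponent) and the systematic bookkeeping of the mixed-degree Lie corrections of the second paragraph. Granting these, $[N,{}_kG]=N\omega^{k}$ has $(k,n,r)$-bounded order, as required.
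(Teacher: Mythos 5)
Your argument does not close, and you say so yourself: the decisive step --- terminating the descent, i.e.\ showing that the deep terms $N\omega^{k+m}$ eventually fall into the subgroup $P$ generated by the pure powers --- is flagged as ``exactly the delicate point'' and is never carried out. The residual-nilpotence reduction you invoke for it is not available for free (there is no reason a priori that $\bigcap_m N\omega^m=0$, and passing to finite quotients does not obviously preserve the bounds you need), and the polarization step itself only recovers the symmetric part of the degree-$k$ monomials modulo $k!$-torsion and modulo Lie-type corrections, both of which are left as promissory notes. As written this is an outline of a strategy with its central difficulty unresolved, not a proof.

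The difficulty is self-inflicted: the commutator flip you apply to pure powers already controls the mixed monomials, because only the \emph{first} letter of $[N,b_1,b_2,\dots,b_k]$ ever needs to be moved. For $g\in N$ one has $[g,b_1]=g^{-1}g^{b_1}=g^{b_1}g^{-1}=[b_1,g^{-1}]$, since $g^{-1}$ and $g^{b_1}$ both lie in the abelian normal subgroup $N$; hence
$$[N,b_1,b_2,\dots,b_k]=[b_1,N,b_2,\dots,b_k]\subseteq X_k(b_1),$$
a set of size at most $n$ whenever $b_1\in\{a_1,\dots,a_r\}$ --- the later entries $b_2,\dots,b_k$ are arbitrary and require no symmetrization whatsoever. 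This is exactly the paper's proof: each $N_{\bf b}=[N,b_1,\dots,b_k]$, with ${\bf b}$ ranging over the $r^k$ tuples of generators, is a subgroup of order at most $n$ (it is a subgroup because $N$ is abelian); by Lemma \ref{yes} its elements have boundedly many conjugates, so the product $N_0$ of all conjugates of all these subgroups has $(k,n,r)$-bounded order; and since the $a_i$ generate $G$ and $N_0$ is normal, $N/N_0\leq Z_k(G/N_0)$, i.e.\ $[N,{}_kG]\leq N_0$. No group-ring filtration, polarization, or residual-nilpotence argument is needed.
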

\begin{proof} If $k=1$, then $G$ is generated by $r$ elements whose centralizers have index at most $n$. It follows that the centre of $G$ has index at most $n^r$ and so, by Schur's theorem \cite[Theorem 10.1.4]{rob}, $G'$ has finite $(n,r)$-bounded order. 

We therefore assume that $k\geq2$. Let $A=\{a_1,\dots,a_r\}$ and ${\bf A}=A\times A\times\dots\times A$, where $A$ occurs $k$ times. For ${\bf b}=(b_1,\dots,b_k)\in{\bf A}$ let $N_{\bf b}$ stand for $[N,b_1,\dots,b_k]$. Note that since $N$ is abelian, $N_{\bf b}$ is a subgroup. If $g\in N$, note that $[g,b_1]=[b_1,g^{-1}]$. Therefore $$N_{\bf b}=[b_1,N,b_2\dots,b_k]\leq X_k(b_1).$$ We conclude that $N_{\bf b}$ is finite of order at most $n$. Let $N_0$ be the product of all conjugates of $N_{\bf b}$, where ${\bf b}$ ranges over ${\bf A}$. The above observations combined with Lemma \ref{yes} imply that $N_0$ is finite and has $(k,n,r)$-bounded order. Obviously, $N/N_0\leq Z_k(G/N_0)$. This completes the proof.
\end{proof}

\begin{proof}[Proof of Theorem \ref{main2}] Let $N=\gamma_{k+1}(G)$. Lemma \ref{generation} shows that $N$ is generated by conjugates of the set $\cup_iX_k(a_i)$. It follows from Lemma \ref{yes} that $C_G(N)$ has finite $(k,n,r)$-bounded index in $G$. In particular, the index $[N:Z(N)]$ is $(k,n,r)$-bounded and so by Schur's theorem $N'$ has finite $(k,n,r)$-bounded order. We pass to the quotient $G/N'$ and assume that $N$ is abelian. Now Lemma \ref{abeliansubgr} tells us that $[N,{}_kG]$ has finite $(k,n,r)$-bounded order. Pass to the quotient $G/[N,{}_kG]$ and assume that $[N,{}_kG]$=1. Thus, $G$ is nilpotent of class at most $2k$. Arguing by induction on the class of $G$ assume that the image of $N$ in $G/Z(G)$ has finite $(k,n,r)$-bounded order. By \cite{guma} $\gamma_{k+2}(G)$ has finite $(k,n,r)$-bounded order so we can factor out $\gamma_{k+2}(G)$ and assume that $N\leq Z(G)$. Since $N$ is generated by the set $\cup_iX_k(a_i)$, which is of bounded size, it is sufficient to show that any element of $X_k(a_i)$ has order at most $n$. Thus, choose $a\in\{a_1,\dots,a_r\}$ and $g_1,\dots,g_k\in G$. Keeping in mind that $G$ is nilpotent of class $k+1$ for any $i\geq1$ write $$[a,g_1,\dots,g_k]^i=[a,g_1,\dots,g_k^i]\in X_k(a).$$ This shows that $X_k(a)$ is closed under taking powers of its elements. We conclude that any element in $X_k(a)$ has order at most $n$. The theorem follows.
\end{proof}

We now deal with Theorem \ref{main3}.

\begin{proof}[Proof of Theorem \ref{main3}] Let $G$ be a group and set $T=FC_k(G)$. It is immediate from Lemma \ref{product} that $T$ is a subgroup. To show that $\gamma_{k+1}(T)$ is locally normal, choose a finite subset $E\subseteq\gamma_{k+1}(T)$. There are finitely many elements $a_1,\dots,a_r\in T$ such that $E\subseteq\gamma_{k+1}(\langle a_1,\dots,a_r\rangle)$. Set $A=\langle a_1,\dots,a_r\rangle$.  Theorem \ref{main2} says that $\gamma_{k+1}(A)$ is finite. Moreover the combination of Lemma \ref{generation} and Lemma \ref{yes} shows that $\gamma_{k+1}(A)$ is generated by elements whose centralizers have finite index in $G$. We deduce that $\gamma_{k+1}(A)$ is contained in a finite normal subgroup of $G$. The result follows.
\end{proof}

Finally we furnish a proof of Theorem \ref{profinite}.

\begin{proof}[Proof of Theorem \ref{profinite}] By hypotheses, $G$ is a profinite $FC_k$-group. For every positive integer $i$ set $$S_i=\{x\in G;\ |x|_k\leq i\}.$$ The sets $S_i$ are closed and their union is the whole group $G$. Therefore, by Baire category theorem \cite[Theorem 34]{kel} at least one of these sets has non-empty interior. It follows that there is a positive integer $j$, an element $a\in G$, and an open normal subgroup $N\leq G$ such that the coset $Na$ is contained in $S_j$. Since every element of $N$ can be written as a product $xy^{-1}$, where $x,y\in Na$, Lemma \ref{product} shows that $|g|_k\leq j^4$ for all $g\in N$. Choose a transversal $a_1,\dots,a_r$ of $N$ in $G$, and let $m$ be the maximum of $|a_1|_k,\dots,|a_r|_k$. Note that every element of $G$ can be written as a product $bg$, where $b\in\{a_1,\dots,a_r\}$ and $g\in N$. Now Lemma \ref{product} shows that $|x|_k\leq j^4m^3$ for any $x\in G$. Hence, by Theorem \ref{main}, $\gamma_{k+1}(G)$ has finite order.
\end{proof}

\end{document}